%
%
%
%
%

\newif\ifarxiv
\arxivtrue

%
\RequirePackage{fix-cm}
\documentclass[smallextended]{svjour3}       
\smartqed  
\usepackage{graphicx}
%
%
%
%
\ifarxiv 
\journalname{ArXiv}
\fi
%



\usepackage{graphicx} 
\usepackage{amsfonts} 
\usepackage{amsmath}
\usepackage{amssymb}
\usepackage{algorithm}
\usepackage{algorithmic}

\usepackage{hyperref} 



\newtheorem{assumption}{Assumption}



\setlength{\marginparwidth}{1.8cm}
\usepackage[textwidth=2cm, textsize=scriptsize]{todonotes}






\newcommand{\argmin}{\mathop{\mathrm{arg\,min}}}




\def\R{\mathbb{R}}

\newcommand{\rp}{\mathbb{R}^p}
\newcommand{\rn}{\mathbb{R}^n}




\newcommand{\est}{\hat\beta}

\newcommand{\lest}{\tilde\lambda}

\newcommand{\bt}{\beta^*}

\newcommand{\betatrue}{\beta^*}

\newcommand{\lasso}{\tilde \beta}



\newcommand{\consttrex}{c}
\newcommand{\consttrextwo}{c}

\renewcommand{\lasso}{\textcolor{red}{\tilde\beta}}
\renewcommand{\lasso}{{\tilde\beta}}
\newcommand{\lassoa}{\lasso(\lambda)}
\newcommand{\lassoat}{\lasso(\tilde\lambda)}
\newcommand{\lassosqrt}{\textcolor{red}{\bar\beta}}
\renewcommand{\lassosqrt}{{\bar\beta}}
\newcommand{\lassosqrta}{\lassosqrt(\gamma)}
\newcommand{\trex}{\textcolor{red}{\hat\beta}}
\renewcommand{\trex}{{\hat\beta}}

\newcommand{\scaledlassotext}{scaled LASSO}
\newcommand{\linselecttext}{LASSO-LinSelect}
\newcommand{\genconh}{h}
\usepackage{pdfsync}
\usepackage{fancyhdr}

\usepackage{color}
\usepackage{xcolor}

\newif\ifgray
\graytrue
\grayfalse
\IfFileExists{BackgroundColor.tex}{\grayfalse
}{}
\IfFileExists{../BackgroundColor.tex}{}{}
\ifgray
\pagecolor{black!30!white}
\fi


\begin{document}

\title{Prediction Error Bounds for\\Linear Regression With the TREX}



\author{Jacob Bien \and Irina Gaynanova \and Johannes Lederer${}^*$ \and Christian M\"uller
}

\authorrunning{Bien, Gaynanova, Lederer, M\"uller} 

\institute{${}^*$Johannes Lederer\at 
Departments of Statistics and Biostatistics\\
University of Washington\\
Box 354322\\
Seattle, WA 98195-4322\\
\email{ledererj@uw.edu}}

\ifarxiv
\date{}
\else
\date{Received: date / Accepted: date}
\fi

\maketitle

\begin{abstract}
The TREX is a recently introduced approach to sparse linear regression. In contrast to most well-known approaches to penalized regression, the TREX can be formulated without the use of tuning parameters.  
In this paper, we establish the first known prediction error bounds for the TREX. Additionally, we introduce extensions of the TREX to a more general class of penalties, and we provide a bound on the prediction error in this generalized setting. These results deepen the understanding of TREX from a theoretical perspective and provide new insights into penalized regression in general.
\keywords{TREX \and high-dimensional regression \and tuning parameters \and oracle inequalities}
\subclass{62J07}
\end{abstract}

\newcommand{\citep}[1]{\cite{#1}}
\newcommand{\citet}[1]{\cite{#1}}
\newcommand{\citealt}[1]{\cite{#1}}
\newcommand{\newjl}[1]{\color{blue}#1\color{black}}
\ifarxiv
\renewcommand{\newjl}[1]{\color{black}#1\color{black}}
\fi

\newcommand{\newjls}[1]{\color{olive}#1\color{black}}
\renewcommand{\newjls}[1]{#1}
\newcommand{\sqrtlassotext}{square-root LASSO}
\newcommand{\lassotext}{LASSO}

\section{Introduction}
The high dimensionality of contemporary datasets has fueled extensive work in developing and studying penalized estimators, such as the LASSO~\citep{Tibshirani-LASSO}, SCAD~\citep{Fan_Li01}, and MCP~\citep{Zhang10}. However, the performance of each of these estimators depends on one or more tuning parameters that can be difficult to calibrate. In particular, among the many different approaches to the calibration of tuning parameters, only very few are equipped with theoretical guarantees.

The TREX~\citep{LedererMueller:14} is a recently proposed approach to sparse regression that attempts to sidestep this calibration issue by avoiding tuning parameters altogether. Numerical studies have suggested that it may be a competitive basis for high-dimensional linear regression and graph estimation~\citep{LedererMueller:14}. Moreover, the TREX fits within the knockoff framework of~\citet{Barber2014a}, leading to theoretical bounds for false discovery rates~\citep{Bien16}. 
On the other hand, the TREX has not yet been equipped with any oracle inequalities, which form the foundation of high-dimensional theories.


In this paper, we make two contributions.
\begin{itemize}
\item We develop the first prediction error bounds for linear regression with the TREX.
\item We extend the TREX to a class of more general penalties.
\end{itemize}
These results represent a first step toward a complete theoretical treatment of the TREX approach. 
In addition, the results 
highlight the role of tuning parameters in high-dimensional regression in general.  For example, while most results in the literature make statements about an estimator assuming an oracle could choose the tuning parameter, the prediction error bounds we present pertain to the TREX as it would actually be used by a practitioner. Finally, despite its non-convex objective function, recent work has shown that the TREX can be efficiently globally optimized using machinery from convex optimization~\citep{Bien16}. 
This property plays a key role in closing a practical gap that would otherwise be present in any theoretical results that are based on the global minimizer.

The rest of the paper is organized as follows. In the remainder of this section, we specify the framework and mention further literature. In Section~\ref{sec:review}, we review the TREX approach. In Section~\ref{sec:theory}, we present the theoretical guarantees. In Section~\ref{sec:generalization}, we formulate an extension of the TREX to arbitrary norm penalties and show how the theory presented earlier generalizes to this context. In Section~\ref{sec:discussion}, we conclude with a discussion. The proofs are deferred to the Appendix.


\newcommand{\noise}{\varepsilon}
\newcommand{\noisesep}{\zeta}
\subsection*{Framework and Notation}
We consider linear regression models of the form
\begin{equation}
  \label{d:model}
  Y= X\bt + \noise,
\end{equation}
where $Y\in\rn$ is a response vector, $X\in\R^{n\times p}$ a design matrix, and $\noise\in\rn$ a noise vector. We allow in particular for high-dimensional settings, where $p$ rivals or exceeds~$n$, and general distributions of the noise $\noise$ that do not need to be known to the statistician. Typical targets in  this framework are $\bt$ (estimation), the support of $\bt$ (variable selection), or $X\bt$ (prediction). In this study, we focus on prediction. 

\newjl{Throughout the manuscript,  we use the norms $\|v\|_1 := \sum_{i=1}^l|v_i|$, $\|v\|_2 := (\sum_{i=1}^lv_i^2)^{1/2}$,  and $\|v\|_{\infty} := \max_{i\in\{1,\dots,l\}} |v_i|$ for vectors $v\in \R^l$, $l\in\{1,2,\dots\}$.}

\subsection*{Related Literature}
While this work is focused exclusively on the TREX, for completeness, we mention some alternative approaches to tuning parameter calibration. Calibration schemes for the LASSO have been introduced and studied in various papers, including~\citet{Chatterjee15,Chetelat_Lederer_Salmon14,Chichignoud_Lederer_Wainwright14,Giraud12,Meinshausen:2010,Sabourin15,Shah13}. A LASSO-type algorithm with variable selection guarantees was introduced in~\citet{Lim16}. 
\newjls{The square-root LASSO~\citep{sqrtlasso}, described in the next section, and similarly the scaled LASSO~\citep{scaledlasso} are aimed at avoiding the calibration to the noise variance. }
 A version of the square-root LASSO with a group penalty was formulated in~\citet{Yoyo13}.

\section{A Brief Review of the TREX Approach}\label{sec:review}

The TREX was motivated by a reformulation and extension of the square-root LASSO approach. Following the line of arguments in~\citet{LedererMueller:14}, we first review the LASSO estimator~\citep{Tibshirani-LASSO} defined as
\begin{equation}
\label{d:lasso}
  \lassoa\in\argmin_{\beta\in\rp}\Big\{{\|Y-X\beta\|_2^2}+2\lambda\|\beta\|_1\Big\}~~~~~~~~~~~~~~~(\lambda > 0).
\end{equation}
\newjl{It is well-understood that tuning parameters of the form
\begin{equation*}
\lambda=\genconh {\|X^\top \noise\|_\infty}= \genconh{\sigma\|X^\top \noisesep\|_\infty} ,
\end{equation*}
where $\genconh>0$ is a constant, and where we factor out the standard deviation of the noise $\sigma$ (that is, $\noisesep:=\noise/\sigma$) for illustration purposes, can be theoretically sound---see~\citealt{Buhlmann11} and references therein.  Indeed, these choices for $\lambda$ follow from oracle inequalities for the LASSO (see the next section). 
In practice, however, the corresponding calibration of~$\lambda$ can be challenging, because $\genconh{\sigma\|X^\top \noisesep\|_\infty}$ is a random quantity that  depends on several aspects of the model, including the design matrix~$X$ and the in practice unknown  standard deviation of the noise~$\sigma$ and noise vector~$\noisesep$.}


The square-root LASSO~\citep{sqrtlasso} is designed to obviate the need for calibration with respect to the standard deviation of the noise. As described in~\citet{LedererMueller:14}, one can write the square-root LASSO as
\begin{equation}
\label{SQRT2}
  \lassosqrta\in\argmin_{\beta\in\rp}\Bigg\{\frac{{\|Y-X\beta\|_2^2}}{\frac{\|Y-X\beta\|_2}{\sqrt n}}+\gamma\|\beta\|_1\Bigg\}~~~~~~~~~~~~~~~(\gamma>0) \,,
\end{equation}
which highlights the view of this as the LASSO with a scaling factor of ${\|Y-X\beta\|_2}/{\sqrt n}$. This  factor acts as an inherent estimator of the standard deviation of the noise $\sigma$ and therefore makes further calibration of $\sigma$ unnecessary.
\newjl{A suitable theoretical form of $\gamma$ is (see, for example,~\citealt{sqrtlasso})
\begin{equation*}
\gamma=\genconh{\|X^\top \noisesep\|_\infty}
\end{equation*}
for constants $\genconh>0$.
Note that $\genconh{\|X^\top \noisesep\|_\infty}$ is still a random quantity that depends on the typically unknown $\noisesep$, so  that  $\gamma$ is still subject to a tuning scheme.}  

The motivation for the TREX is to automatically calibrate all model parameters. The key idea is to estimate the entire quantity ${\sigma\|X^\top \noisesep\|_\infty}$ rather than $\sigma$ alone. Thus, in line with the discussion above, ${\sigma\|X^\top \noisesep\|_\infty}$ is mimicked by \newjls{${\|X^\top(Y-X\beta)\|_\infty}$}, leading to the estimator
\begin{align}
\label{trexobjective}
  \trex\in\argmin_{\beta\in\rp}\Big\{\frac{\|Y-X\beta\|_2^2}{c\|X^\top (Y-X\beta)\|_\infty}+\|\beta\|_1\Big\} \,,
\end{align}
where $c \in (0,2)$ is a constant. The value $c=1/2$ is used in the original TREX proposal~\citep{LedererMueller:14}. With this setting, TREX is a single, tuning-free estimator. Similar to the bootstrap LASSO \citep{Bach08}, the TREX can be equipped with bootstrapping techniques for variable ranking \citep{LedererMueller:14} or used for graphical model learning using node-wise regression \citep{LedererMueller14b}.  

An initial practical challenge regarding the TREX estimator was the non-convexity of the underlying objective function in~\eqref{trexobjective}. While first approaches to minimizing the TREX objective function relied on fast heuristic approximation schemes~\citep{LedererMueller:14}, it was shown in \citet{Bien16} that the globally optimal solution of the TREX objective can be found in polynomial time via \textit{convex} optimization techniques. This was achieved by stating the problem in~\eqref{trexobjective} as an equivalent minimization problem over $2p$ convex sub-problems. These sub-problems can be solved via \newjl{second-order cone } programming \citep{Bien16} or proximal algorithms thanks to the existence of efficient proximity operators \citep{Combettes16}. In the $n>p$ setting, availability of the globally optimal solution and the different convex sub-problem solutions led to provable guarantees on false discovery rates of the TREX~\citep{Bien16} by extending the knock-off filter framework originally introduced in \citet{Barber2014a}. Other theoretical guarantees for the TREX have, thus far, remained elusive.

\newcommand{\xte}{\|X^\top \varepsilon\|_\infty}
\section{Theoretical Guarantees for the TREX}\label{sec:theory}
Here, we derive novel theoretical guarantees for sparse linear regression with the TREX. We focus on guarantees on the prediction loss \newjls{$\|X\trex-X\betatrue\|^2_2/n$, }  which is an important step toward a complete understanding of the theoretical properties of the TREX. We use the LASSO as a point of reference, since it is the most well-known and well-studied approach for sparse linear regression. 
\newjl{For ease of exposition, we consider fixed design matrices $X$ that are normalized such that each column has Euclidean norm~$\sqrt n$. } 
\newjls{We also do not impose restrictions on the noise distribution; }
 in particular, we allow the entries of the noise vector to be correlated, the noise vector~$\varepsilon$ and the design matrix~$X$ to be correlated, and we allow for heavy-tailed noise distributions. The proofs of our results are deferred to the Appendix.

\pagebreak
\subsection{Review of Bounds for \newjl{LASSO-type Estimators}}\label{s:lassor}
\newjls{For the \lassotext\ estimator defined in~\eqref{d:lasso}, two types of prediction bounds are known in the literature. 
The first type is  termed a ``fast-rate bound.'' 
A standard representative invokes the compatibility condition} 
\begin{equation}\label{compcond}
  \nu\le \frac{\sqrt s\|X\eta\|_2}{\sqrt n\|\eta_S\|_1}\text{~~for all~~}\eta\in\rp\text{~such that~}\|\eta_{S^c}\|_1\leq 3\|\eta_S\|_1,
\end{equation}
\newjls{where $\nu>0$ is a constant, } $S:=\operatorname{supp}(\bt)$ is the support of $\bt$ (the index set of the non-zero entries of $\bt$), $S^c$ the complement of $S$, and $s:=|S|$ the sparsity level. The compatibility condition ensures that the correlations in the design matrix~$X$ are small. Under this assumption, the prediction loss can be bounded as follows, cf. \cite[Theorem~6.1]{Buhlmann11}.
\begin{lemma}\label{fastratelasso}
If $\lambda\geq 2\|X^\top\varepsilon\|_\infty$ and the compatibility condition~\eqref{compcond} is met for a constant $\nu>0$, the prediction loss of the LASSO satisfies
  \begin{equation*}
    \frac{\|X\lassoa-X\bt \|_2^2}{n}\leq \frac{16s \lambda^2 }{\nu^2n^2}.
  \end{equation*}
\end{lemma}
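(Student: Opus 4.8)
The plan is to run the standard \emph{basic inequality} argument for $\ell_1$-penalized least squares, writing $\eta := \lassoa - \bt$ for the estimation error. Because $\lassoa$ minimizes the objective in~\eqref{d:lasso}, comparing its value at $\lassoa$ with its value at the truth $\bt$ gives
\begin{equation*}
\|Y-X\lassoa\|_2^2 + 2\lambda\|\lassoa\|_1 \leq \|Y-X\bt\|_2^2 + 2\lambda\|\bt\|_1 .
\end{equation*}
Substituting the model $Y = X\bt + \noise$, expanding both squared norms, and cancelling the common $\|\noise\|_2^2$ term rearranges this into the key inequality
\begin{equation*}
\|X\eta\|_2^2 \leq 2\noise^\top X\eta + 2\lambda\|\bt\|_1 - 2\lambda\|\lassoa\|_1 .
\end{equation*}

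Next I would separately control the two terms on the right. For the stochastic term, H\"older's inequality together with the assumption $\lambda \geq 2\|X^\top\noise\|_\infty$ gives $2\noise^\top X\eta = 2(X^\top\noise)^\top\eta \leq 2\|X^\top\noise\|_\infty\|\eta\|_1 \leq \lambda\|\eta\|_1$. For the penalty term, I would use that $\bt$ is supported on $S$: splitting the $\ell_1$-norms over $S$ and $S^c$, noting that $(\lassoa)_{S^c} = \eta_{S^c}$, and applying the triangle inequality on $S$ yields $\|\bt\|_1 - \|\lassoa\|_1 \leq \|\eta_S\|_1 - \|\eta_{S^c}\|_1$. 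Combining the two bounds and writing $\|\eta\|_1 = \|\eta_S\|_1 + \|\eta_{S^c}\|_1$ produces
\begin{equation*}
\|X\eta\|_2^2 \leq 3\lambda\|\eta_S\|_1 - \lambda\|\eta_{S^c}\|_1 .
\end{equation*}
Since the left-hand side is nonnegative, this forces the cone condition $\|\eta_{S^c}\|_1 \leq 3\|\eta_S\|_1$, so that $\eta$ is one of the admissible vectors in the compatibility condition~\eqref{compcond}.

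Finally I would close the loop. Discarding the nonpositive term leaves $\|X\eta\|_2^2 \leq 3\lambda\|\eta_S\|_1$, and~\eqref{compcond} applied to $\eta$ bounds $\|\eta_S\|_1 \leq \sqrt{s}\,\|X\eta\|_2/(\sqrt n\,\nu)$. Substituting turns the estimate into a quadratic inequality in the single unknown $\|X\eta\|_2$, namely $\|X\eta\|_2^2 \leq 3\lambda\sqrt{s}\,\|X\eta\|_2/(\sqrt n\,\nu)$; dividing by $\|X\eta\|_2$ (the case $\|X\eta\|_2=0$ being trivial), squaring, and dividing by $n$ delivers a bound of the stated form $\|X\lassoa - X\bt\|_2^2/n \leq C\,s\lambda^2/(\nu^2 n^2)$. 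A direct run of these steps yields $C=9$, which in particular implies the stated (slightly weaker) bound with constant $16$ quoted from~\cite{Buhlmann11}.

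The argument is essentially mechanical once assembled, so the only real obstacle is bookkeeping. The delicate point is the penalty-difference step: one must track carefully which coordinates of $\lassoa$ and $\bt$ survive on $S$ versus $S^c$ so that exactly the cone $\|\eta_{S^c}\|_1 \leq 3\|\eta_S\|_1$ emerges, since this is precisely the cone on which~\eqref{compcond} is assumed to hold. After that, threading the numerical factors through the H\"older step and the final quadratic step is all that remains.
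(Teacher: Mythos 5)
Your argument is correct and is precisely the standard basic-inequality proof that the paper invokes by citing \citet{Buhlmann11} (Theorem~6.1): the comparison of objective values, H\"older's inequality with $\lambda\ge 2\|X^\top\varepsilon\|_\infty$, the cone condition $\|\eta_{S^c}\|_1\le 3\|\eta_S\|_1$, and the compatibility condition~\eqref{compcond} all appear exactly as in that reference. Your constant $9$ is in fact slightly sharper than the stated $16$ (which arises in the cited version because the term $\lambda\|\eta\|_1$ is retained on the left-hand side), so the claimed bound follows a fortiori.
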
 
Similar bounds for the lasso can be found in\newjls{~\citet{Bickel09,Bunea06,Candes09,Candes07,ArnakMoYo14,DT07,DT12a,DT12b,vandeG07,Sara09,Raskutti10,RigTsy11}. } \newjl{ Note that we have adopted the normalization $1/n$ of the prediction error throughout, relating to the $\sqrt n$-normalization of the columns of $X$. 
If one would know the true support of $\betatrue$, then a least-squares on this support would have expected prediction loss~$s\sigma^2/n$. 
The above result states that the lasso can---under the given assumptions---achieve the same rate up to constants and an ``entropy'' factor encapsulated in~$\lambda^2$. Indeed, if $\lambda=2\|X^\top\varepsilon\|_\infty$, we recover the rate $s\sigma^2/n$ up to  the constant $16/\nu^2$ and the entropy factor $\|X^\top\varepsilon\|_\infty^2/(\sigma^2n)$.  
Given a distribution, one can bound the latter factor in probability by using maximal inequalities, see~\cite{Boucheron13,vandeGeer00,vanderVaart96} or more recently~\cite{vdGeer11b,YoyoSara12,wellner2017bennett}. In the case of i.i.d. Gaussian noise, for example, we find that the factor can be bounded by $\genconh\log p$ with a sufficiently large constant $\genconh$.}

\newjl{Such bounds can also be derived for the \sqrtlassotext\ and other penalized regression methods. 
For example, using  the techniques in \cite{sqrtlasso}, one can derive (under some minor additional conditions) that there are constants $\genconh,\genconh'>0$ such that if $\gamma\geq \genconh\|X^\top\noisesep\|_\infty$, the prediction loss of the \sqrtlassotext\ in~\eqref{SQRT2} satisfies
  \begin{equation*}
    \frac{\|X\lassosqrta-X\bt \|_2^2}{n}\leq \frac{\genconh' s\sigma^2 \gamma^2 }{\nu^2n^2}.
  \end{equation*}
The same bound (except for slightly different assumptions and constants) holds also for \lassotext-LinSelect~\cite{Baraud09}, for example. We refer the reader to \cite[Proposition~4.2 and~4.3]{Giraud12} for detailed prediction bounds for the \sqrtlassotext\ and \lassotext-LinSelect.
}

\newjls{The second type of bound is termed a ``slow-rate bound.''  In strong contrast to the ``fast-rate bounds,'' } these bounds provide guarantees on the prediction error without assuming weakly correlated~$X$. The standard representative for this type of bound is as follows \citep{HCB08,Kolt10,MM11,RigTsy11}.
\begin{lemma}\label{slowratelasso}
  If $\lambda\geq \|X^\top\varepsilon\|_\infty$, the prediction loss of the LASSO satisfies the following bound:
  \begin{equation*}
   \frac{ \|X\lassoa-X\bt \|_2^2}{n}\leq \frac{4\lambda \|\bt\|_1}{n}.\label{eq:slow}
  \end{equation*}
\end{lemma}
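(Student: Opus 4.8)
The plan is to run the standard ``basic inequality'' argument, which delivers a slow-rate bound directly and, in contrast to the fast-rate Lemma~\ref{fastratelasso}, never invokes a cone or compatibility condition. First I would exploit the defining optimality of $\lassoa$ in~\eqref{d:lasso}: since $\lassoa$ minimizes the objective, plugging in the competitor $\bt$ gives
\begin{equation*}
\|Y-X\lassoa\|_2^2 + 2\lambda\|\lassoa\|_1 \leq \|Y-X\bt\|_2^2 + 2\lambda\|\bt\|_1.
\end{equation*}
Next I would substitute the model $Y = X\bt + \noise$ from~\eqref{d:model}, so that $Y - X\lassoa = \noise - X(\lassoa - \bt)$ while $Y - X\bt = \noise$. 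Expanding the squared Euclidean norm on the left and cancelling the common term $\|\noise\|_2^2$ from both sides leaves
\begin{equation*}
\|X(\lassoa - \bt)\|_2^2 + 2\lambda\|\lassoa\|_1 \leq 2\noise^\top X(\lassoa - \bt) + 2\lambda\|\bt\|_1.
\end{equation*}

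The crux is to control the linear term $2\noise^\top X(\lassoa - \bt)$. I would rewrite it as $2(X^\top\noise)^\top(\lassoa - \bt)$ and apply H\"older's inequality to obtain the bound $2\|X^\top\noise\|_\infty\,\|\lassoa-\bt\|_1$; the hypothesis $\lambda \geq \|X^\top\noise\|_\infty$ then upgrades this to $2\lambda\|\lassoa-\bt\|_1$. This is the only place the assumption on $\lambda$ enters, and it is the sole probabilistic input to the argument.

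Finally, I would apply the triangle inequality $\|\lassoa - \bt\|_1 \leq \|\lassoa\|_1 + \|\bt\|_1$ on the right-hand side. After substituting and cancelling the $2\lambda\|\lassoa\|_1$ that now appears on both sides, one is left with $\|X(\lassoa - \bt)\|_2^2 \leq 4\lambda\|\bt\|_1$, and dividing by $n$ yields the claim. I do not anticipate any genuine obstacle: the estimate is entirely deterministic once $\lambda$ dominates $\|X^\top\noise\|_\infty$. The conceptual point worth flagging is precisely \emph{why} no compatibility condition is needed here—namely, the triangle-inequality step discards the $\ell_1$ terms wholesale rather than attempting, as in the fast-rate proof, to relate $\|X(\lassoa-\bt)\|_2$ back to $\|\lassoa - \bt\|_1$ through the design, which is exactly the comparison that forces~\eqref{compcond} into play.
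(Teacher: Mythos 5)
Your proof is correct and is exactly the standard ``basic inequality'' argument that the references cited for this lemma (e.g., \citet{RigTsy11}) use; the paper itself does not reprove it but defers to the literature. Every step checks out: the optimality comparison with $\bt$, cancellation of $\|\noise\|_2^2$, H\"older plus $\lambda \geq \|X^\top\noise\|_\infty$ to control the cross term, and the triangle inequality to absorb $2\lambda\|\lassoa\|_1$, yielding the factor $4$.
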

\newjl{The term ``slow-rate'' comes from the fact that the representative above provides rates typically not faster than $1/\sqrt n$ (as compared to $1/n$ above).
Indeed, if again $\lambda=2\|X^\top\varepsilon\|_\infty$ and if the noise is again i.i.d. Gaussian, standard maximal inequalities yield the bound $\genconh\sigma\sqrt{\log p}\|\bt\|_1/\sqrt n$ for a constant $\genconh>0$.
However, there are three main reason for why a superficial comparison of rates is not sufficient and the term ``slow-rate'' is misleading:
First, the assumptions on~$X$ needed for ``fast-rate bounds'' (such as the compatibility condition invoked above) can typically not be verified in practice, because they involve quantities (such as the support of~$\bt$) that are unknown; ``slow-rate bounds,'' in contrast, hold for any~$X$. Second, fast ``slow-rate bounds'' (with rate close to $1/n$) can be derived in some settings~\citep{ArnakMoYo14}, and the constants in ``fast-rate'' bounds can be unfavorably large, masking the $1/n$ rate. 
Third, the rate $1/\sqrt n$ cannot be improved altogether unless further assumptions are imposed~\cite[Proposition~4 on Page~561]{ArnakMoYo14}. }
Refined versions of both types of bound and more exhaustive comparisons can be found in~\citet{ArnakMoYo14,vdGeer11,YoyoMomo12,Lederer16b,Zhuang17}.

\newjl{Similarly for the \sqrtlassotext, if $\gamma=\genconh \|X^\top\noisesep\|_\infty$ for a suitable $\genconh>0$, then the prediction loss  satisfies the following bound:
  \begin{equation*}
   \frac{ \|X\lassosqrta-X\bt \|_2^2}{n}\leq \frac{\genconh'\sigma\gamma \|\bt\|_1}{n}
  \end{equation*}
for a constant $\genconh'>0$.
We refer to \cite{Lederer16b} for details and further references.}

A major limitation of both types of result is the condition on the tuning parameter~$\lambda$. As indicated in Lemmas~\ref{fastratelasso} and~\ref{slowratelasso}, the tuning parameter (i)~needs to be large enough to satisfy the given assumptions but (ii)~small enough to provide sharp bounds. Since \newjls{$\|X^\top\varepsilon\|_\infty$ \newjl{(and equivalently $\|X^\top\noisesep\|_\infty$ in the \sqrtlassotext\ case) are }} unknown in practice, one tries to establish a trade-off by using a data-dependent tuning parameter~$\lest\equiv\lest(Y,X)$ when it comes to applications. 
Plugging this data-dependent tuning parameter  in Lemmas~\ref{fastratelasso} and~\ref{slowratelasso} yields the following.
\begin{corollary}\label{rateslassotuningfast} If $\lest\geq 2\|X^\top\varepsilon\|_\infty$ and the compatibility condition~\eqref{compcond} is met for a constant $\nu>0$, the prediction loss of the LASSO satisfies
  \begin{equation*}
    \frac{\|X\lasso(\lest)-X\bt \|_2^2}{n}\leq \frac{16s {\lest}^2 }{\nu^2n^2}.
  \end{equation*}
\end{corollary}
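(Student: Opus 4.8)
The plan is to derive the corollary as an immediate consequence of Lemma~\ref{fastratelasso}, exploiting the fact that the proof of that lemma is entirely deterministic: it produces the stated bound for \emph{any} value of the tuning parameter exceeding $2\|X^\top\varepsilon\|_\infty$, and it never uses that this value was fixed before seeing the data. Concretely, I would first verify that the conclusion of Lemma~\ref{fastratelasso} holds pointwise on the event where its two hypotheses are satisfied, and then simply substitute the data-dependent $\lest$ for the generic $\lambda$.

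To justify that the lemma's proof is deterministic, I would recall its structure. Setting $\eta := \lassoat - \bt$, one starts from the basic inequality for the minimizer of~\eqref{d:lasso} at the realized value $\lest$, namely $\|Y - X\lassoat\|_2^2 + 2\lest\|\lassoat\|_1 \leq \|Y - X\bt\|_2^2 + 2\lest\|\bt\|_1$. Substituting $Y = X\bt + \varepsilon$ and rearranging gives $\|X\eta\|_2^2 \leq 2\varepsilon^\top X\eta + 2\lest(\|\bt\|_1 - \|\lassoat\|_1)$, and Hölder's inequality together with the hypothesis $\lest \geq 2\|X^\top\varepsilon\|_\infty$ bounds the noise term by $2\|X^\top\varepsilon\|_\infty\|\eta\|_1 \leq \lest\|\eta\|_1$. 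Splitting $\|\eta\|_1$ over $S$ and $S^c$ and using $\|\bt\|_1 - \|\lassoat\|_1 \leq \|\eta_S\|_1 - \|\eta_{S^c}\|_1$ (valid since $\operatorname{supp}(\bt) = S$) yields both the cone condition $\|\eta_{S^c}\|_1 \leq 3\|\eta_S\|_1$ and the inequality $\|X\eta\|_2^2 \leq 3\lest\|\eta_S\|_1$; the compatibility condition~\eqref{compcond} then converts $\|\eta_S\|_1$ into a multiple of $\sqrt{s}\,\|X\eta\|_2/(\sqrt n\,\nu)$, and solving the resulting quadratic in $\|X\eta\|_2$ reproduces exactly the bound of Lemma~\ref{fastratelasso} with $\lest$ in place of $\lambda$, namely $16 s\lest^2/(\nu^2 n^2)$ after normalizing by $n$.

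The essential observation—and really the only thing requiring a check—is that every inequality in this chain is a pointwise statement that references the tuning parameter solely through the bound $\lest \geq 2\|X^\top\varepsilon\|_\infty$; randomness of the tuning parameter is never invoked. Consequently the identical argument applies verbatim with $\lest$ in place of $\lambda$, on the event where both hypotheses hold. The main (and here merely conceptual) obstacle is the worry that a data-dependent tuning parameter might invalidate the basic inequality; this worry is unfounded, because $\lassoat$ is by definition the minimizer of the LASSO objective evaluated at the realized value of $\lest$, so the basic inequality—and hence the whole argument—continues to hold. This is precisely the content the corollary is meant to record: the fast-rate guarantee survives the replacement of an oracle $\lambda$ by a data-driven $\lest$, which is exactly the regime relevant for the tuning-free analysis of the TREX that follows.
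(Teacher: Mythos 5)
Your proposal is correct and matches the paper's own treatment: the paper obtains Corollary~\ref{rateslassotuningfast} simply by plugging the data-dependent $\lest$ into the deterministic statement of Lemma~\ref{fastratelasso}, which is exactly the substitution you justify. Your additional recapitulation of the basic-inequality argument to confirm that every step is pointwise in $\lest$ is accurate and merely makes explicit what the paper leaves implicit.
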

\begin{corollary}\label{rateslassotuningslow}
  If $\lest\geq \|X^\top\varepsilon\|_\infty$, the prediction loss of the LASSO satisfies
  \begin{equation*}
   \frac{ \|X\lasso(\lest)-X\bt \|_2^2}{n}\leq \frac{4\lest \|\bt\|_1}{n}.
  \end{equation*}
\end{corollary}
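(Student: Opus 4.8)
The plan is to recognize that Corollary~\ref{rateslassotuningslow} is not a genuinely new estimate but a pointwise, realization-by-realization application of Lemma~\ref{slowratelasso}. The crucial feature I would stress is that the bound in Lemma~\ref{slowratelasso} is \emph{deterministic}: the only way the penalty enters its proof is through the single requirement that it dominate $\|X^\top\varepsilon\|_\infty$. So I would first revisit that proof and verify that nothing in it uses the fact that $\lambda$ is a fixed constant rather than a data-dependent quantity.

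Concretely, I would start from the basic inequality for the LASSO: since $\lasso(\lambda)$ minimizes the objective in~\eqref{d:lasso}, comparing its value with that at $\bt$ gives
\[
\|Y-X\lasso(\lambda)\|_2^2+2\lambda\|\lasso(\lambda)\|_1\leq\|Y-X\bt\|_2^2+2\lambda\|\bt\|_1.
\]
Substituting $Y=X\bt+\varepsilon$ and expanding the squared norms, the $\|\varepsilon\|_2^2$ terms cancel and one is left with
\[
\|X\lasso(\lambda)-X\bt\|_2^2\leq 2\langle X^\top\varepsilon,\lasso(\lambda)-\bt\rangle+2\lambda\bigl(\|\bt\|_1-\|\lasso(\lambda)\|_1\bigr).
\]
Bounding the cross term via H\"older's inequality by $2\|X^\top\varepsilon\|_\infty\|\lasso(\lambda)-\bt\|_1$, invoking the hypothesis $\lambda\geq\|X^\top\varepsilon\|_\infty$, and applying the triangle inequality $\|\lasso(\lambda)-\bt\|_1\leq\|\lasso(\lambda)\|_1+\|\bt\|_1$ collapses the right-hand side to $4\lambda\|\bt\|_1$; dividing by $n$ recovers Lemma~\ref{slowratelasso} exactly.

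The second and decisive step is the plug-in. I would argue realization-wise: for each outcome of $(Y,X,\varepsilon)$, the data-dependent $\lest\equiv\lest(Y,X)$ is simply a nonnegative number, and $\lasso(\lest)$ is, by definition, the minimizer of~\eqref{d:lasso} for that particular penalty value. Hence the basic inequality, and with it the entire deterministic chain above, holds verbatim with $\lambda$ replaced by $\lest$, provided $\lest\geq\|X^\top\varepsilon\|_\infty$ in that realization. Since this is precisely the hypothesis of the corollary, the claimed bound follows on the event where the hypothesis holds (almost surely, if the hypothesis is assumed almost surely).

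The part that \emph{looks} like an obstacle---the randomness and data-dependence of $\lest$---turns out to be a non-issue, and pointing this out is really the substance of the corollary. Because the estimate is purely pointwise, no independence between $\lest$ and the noise, and no union bound or concentration argument, is required; this is in sharp contrast to the genuinely hard and practically unverifiable requirement that a data-driven $\lest$ actually dominate the unknown $\|X^\top\varepsilon\|_\infty$. I would flag that this residual requirement is exactly the gap the remainder of the paper, and the tuning-free TREX, is designed to address.
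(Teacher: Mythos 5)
Your proof is correct and follows the paper's own route: the paper derives this corollary simply by ``plugging the data-dependent tuning parameter into Lemma~\ref{slowratelasso},'' which is exactly the pointwise, deterministic plug-in argument you make explicit (your re-derivation of the slow-rate bound itself is standard and matches the cited literature). Your closing remark---that the only real issue is whether $\lest$ actually dominates $\|X^\top\varepsilon\|_\infty$, not the data-dependence per se---is precisely the point the paper makes immediately after stating the corollary.
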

These bounds now hinge on guarantees for~$\lest(Y,X)$.  In particular, for such bounds to hold, one would need to guarantee that $\lest(Y,X)\geq 2\|X^\top\varepsilon\|_\infty$; however, standard data-based calibration schemes such as cross-validation, BIC, and AIC, lack such results.
To our knowledge, there are neither results showing that~$\lest$ is sufficiently large to satisfy the given assumptions, nor that~$\lest$ is sufficiently small to provide reasonable bounds. In this sense, the above guarantees are of academic value only.

\subsection{Bounds for the TREX}\label{s:trex_t}
\newcommand{\lambdatrex}{\lest}

\newjls{In this section, we derive two types of prediction bounds for the TREX. The first bound (Theorem~\ref{trexfast}) expresses the TREX's prediction error in terms of the LASSO's prediction error, whereas the second bound (Theorem~\ref{slowratetrex}) stands on its own.}


\newjls{Functions of the form $\|X^{\top}(Y-X\cdot)\|_{\infty}$ are well-known in LASSO settings, since the LASSO KKT conditions specify that $\lambda = \|X^{\top}(Y-X\lassoa\|_{\infty}$ as long as $\lassoa\neq 0$ (and more generally specify that $\|X^{\top}(Y-X\lassoa)\|_{\infty}\le\lambda$). 
We state the TREX bounds in terms of the corresponding quantity, which we denote as 
\begin{equation}\label{eq:hatu}
\hat u\equiv \hat u(Y,X):=\|X^\top(Y-X\trex)\|_\infty.
\end{equation}
}

First, we establish a direct connection between the prediction performance of the TREX and the LASSO under the following assumption.
\begin{assumption}\label{signalstrengthup}
The regression vector $\bt$ is sufficiently small such that
 \begin{align*}
\|\bt\|_1\le\frac{\|\varepsilon\|_2^2}{16\|X^{\top}\varepsilon\|_{\infty}}.
\end{align*} 
\end{assumption}
\newjl{In the case of i.i.d. Gaussian noise, this assumption 
can be loosely interpreted as saying that $\|\bt\|_1$ should be bounded by $\sigma\sqrt{n/\log p}$, which holds in the standard high-dimensional limit in which $(\log p) /n\to0$.
This assumption might look unusual, but recall that the classical ``slow-rate bounds'' for the \lassotext\ have a similar assumption implicit in their formulation. 
In Lemma~\ref{slowratelasso}, for example, under i.i.d. Gaussian noise and optimal tuning, the prediction bound is $\genconh \sigma\sqrt{(\log p)/n}\|\bt\|_1$ for a constant $\genconh$, which is a non-trivial bound only if  $\|\bt\|_1\ll \sigma\sqrt{n/\log p}$.}

\newjls{
\begin{theorem}\label{trexfast} Let Assumption~\ref{signalstrengthup} be fulfilled and let $\lambdatrex:=\max\{2 \hat u, 8\|X^\top \varepsilon\|_\infty/c\}$, where \newjl{$c\in (0,2)$ and } $\hat u$ is defined in~\eqref{eq:hatu}. If $\hat u\le \|X^\top Y\|_\infty/2$, then the prediction loss of the TREX satisfies
\begin{equation*}
\frac{\|X\trex-X\bt\|^{2}_{2}}{n}\le \frac{3\|X\lassoat-X\bt\|^{2}_{2}}{4n}+\frac{7\|X^\top\varepsilon\|_\infty\|\lassoat-\bt\|_1}{2n}.
\end{equation*}
\end{theorem}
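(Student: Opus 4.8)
The plan is to extract everything from the global optimality of the TREX through the single comparison $f(\trex)\le f(\lassoat)$, where $f$ denotes the objective in~\eqref{trexobjective}, and to squeeze two complementary consequences out of it. Writing $D:=\|X^\top(Y-X\lassoat)\|_\infty$, I would first control the two denominators: the LASSO KKT conditions give $D=\lambdatrex$ whenever $\lassoat\neq0$, while the hypothesis $\hat u\le\|X^\top Y\|_\infty/2$ covers the case $\lassoat=0$ (where $D=\|X^\top Y\|_\infty$); together with $\lambdatrex\ge2\hat u$ this yields $\hat u/D\le1/2$. Multiplying $f(\trex)\le f(\lassoat)$ by $c\hat u$ then gives the basic inequality
\begin{equation*}
\|Y-X\trex\|_2^2+c\hat u\|\trex\|_1\le\tfrac12\|Y-X\lassoat\|_2^2+c\hat u\|\lassoat\|_1.
\end{equation*}
Feeding in the LASSO's own optimality $\|Y-X\lassoat\|_2^2+2\lambdatrex\|\lassoat\|_1\le\|Y-X\trex\|_2^2+2\lambdatrex\|\trex\|_1$ and using $\lambdatrex\ge2\hat u>c\hat u$ forces $\|\trex\|_1\ge\|\lassoat\|_1$, so the penalty term on the right can be dropped and I obtain the clean comparison $\|Y-X\trex\|_2^2\le\tfrac12\|Y-X\lassoat\|_2^2$. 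The factor $\tfrac12$ here is exactly what produces the sub-unit coefficient $3/4$.

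The second, and crucial, consequence comes from the \emph{same} inequality $f(\trex)\le f(\lassoat)$ but without multiplying by $c\hat u$: rearranging and discarding the nonpositive term $-\|Y-X\trex\|_2^2/(c\hat u)$ gives $\|\trex\|_1-\|\lassoat\|_1\le\|Y-X\lassoat\|_2^2/(c\,D)$, and since $c\lambdatrex\ge8\|X^\top\varepsilon\|_\infty$ (from $\lambdatrex\ge8\|X^\top\varepsilon\|_\infty/c$) this sharpens, for $\lassoat\neq0$, to the $\ell_1$-control
\begin{equation*}
\|\trex\|_1-\|\lassoat\|_1\le\frac{\|Y-X\lassoat\|_2^2}{8\|X^\top\varepsilon\|_\infty}.
\end{equation*}

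To conclude I would pass from squared residuals to prediction error: expanding $Y=X\bt+\varepsilon$ converts $\|Y-X\trex\|_2^2\le\tfrac12\|Y-X\lassoat\|_2^2$ into
\begin{equation*}
\|X\trex-X\bt\|_2^2\le\tfrac12\|X\lassoat-X\bt\|_2^2-\tfrac12\|\varepsilon\|_2^2+2\langle X^\top\varepsilon,\trex-\bt\rangle-\langle X^\top\varepsilon,\lassoat-\bt\rangle,
\end{equation*}
and I would bound each inner product by $\|X^\top\varepsilon\|_\infty$ times the matching $\ell_1$ norm. The dangerous term $2\|X^\top\varepsilon\|_\infty\|\trex-\bt\|_1$ is then tamed by the triangle inequality together with the displayed $\ell_1$-control; re-expanding $\|Y-X\lassoat\|_2^2$ and absorbing the pure $\|\bt\|_1$ remainders via Assumption~\ref{signalstrengthup} in the form $16\|X^\top\varepsilon\|_\infty\|\bt\|_1\le\|\varepsilon\|_2^2$ makes the $-\tfrac12\|\varepsilon\|_2^2$ budget exactly sufficient. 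Careful bookkeeping then collapses everything to $\tfrac34\|X\lassoat-X\bt\|_2^2+\tfrac72\|X^\top\varepsilon\|_\infty\|\lassoat-\bt\|_1$.

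The main obstacle is the $\ell_1$-control step and, with it, the handling of the TREX denominator $\hat u$. Since $\|\trex\|_1$ is a priori unbounded, the whole estimate hinges on the fact that $\lambdatrex\ge8\|X^\top\varepsilon\|_\infty/c$ makes $c\,D\ge8\|X^\top\varepsilon\|_\infty$, which is also what keeps the final constants free of $c$ uniformly over $c\in(0,2)$. The degenerate case $\lassoat=0$, in which $D=\|X^\top Y\|_\infty$ may fall below $\lambdatrex$, must be argued separately, and this is precisely where the auxiliary hypothesis $\hat u\le\|X^\top Y\|_\infty/2$ is indispensable; checking that the cross-term accounting reproduces the stated constants $3/4$ and $7/2$ is the delicate remaining calculation.
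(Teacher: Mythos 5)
Your proposal is correct and follows essentially the same route as the paper's proof (which establishes Theorem~\ref{res:fast4} and specializes to $\kappa_1=2$, $\kappa_2=8$): the same two consequences of the comparison $f(\trex)\le f(\lassoat)$, the same use of $c\lest\ge 8\|X^\top\varepsilon\|_\infty$ to control $\|\trex\|_1$, the same separate treatment of $\lassoat=0$ via $\hat u\le\|X^\top Y\|_\infty/2$, and the same absorption of the residual $\|\bt\|_1$ terms through Assumption~\ref{signalstrengthup}. The only cosmetic difference is that you obtain $\|\trex\|_1\ge\|\lassoat\|_1$ by adding the TREX and LASSO optimality inequalities directly at the tuning parameter $\lest$, whereas the paper first proves $\|\trex\|_1\ge\|\lasso(\hat u)\|_1$ (Lemma~\ref{res:l1trexlasso}) and then invokes monotonicity of $\lambda\mapsto\|\lassoa\|_1$.
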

}
 The quantity $\hat u$ from~\eqref{eq:hatu} plays a similar role for the TREX as the data-driven tuning parameter $\lest$ plays for the LASSO. The main feature of Theorem~\ref{trexfast}, however, is that it allows for any value  $\hat u\le \|X^\top Y\|_\infty.$ Indeed, the tuning parameter $\lambdatrex$ is lower bounded by $8\xte/c$, irrespective of the value of $\hat u$, which ensures that the condition $\lambda\gtrsim \xte$ on the tuning parameter $\lambda$ in standard results for the LASSO, such as Corollaries~\ref{rateslassotuningfast} and~\ref{rateslassotuningslow}, is always fulfilled. Therefore, Theorem~\ref{trexfast} in conjunction with the known results for LASSO,  provides a concrete bound for the prediction loss $\|X\trex-X\bt\|^{2}_{2}/n$ of the TREX. 
\newjl{Specifically, if the \lassotext\ satisfies the compatibility condition and $\hat u$ is of the same order as $\|X^\top \varepsilon\|_\infty$, the above bound implies the ``fast-rate'' $s\sigma^2(\log p)/n$ up to constants and an entropy factor as before.
The following corollary highlights these aspects once more.}

\begin{corollary}\label{cor:fastT} Let Assumption~\ref{signalstrengthup} be fulfilled, and let $\lambdatrex:=\max\{2 \hat u, 8\|X^\top \varepsilon\|_\infty/c\}$, where $\hat u$ is defined in~\eqref{eq:hatu}. If $\hat u\le \|X^\top Y\|_\infty/2$, then the prediction loss of the TREX satisfies
\begin{equation*}
\frac{\|X\trex-X\bt\|^{2}_{2}}{n}\le \frac{12s \lambdatrex^2}{\nu^2n^2},
\end{equation*}
where $s$ is the sparsity level of $\betatrue$, and $\nu$ is the compatibility constant from~\eqref{compcond}.
\end{corollary}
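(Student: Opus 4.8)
The plan is to derive Corollary~\ref{cor:fastT} from Theorem~\ref{trexfast} by controlling each of the two terms on its right-hand side with a sharp LASSO analysis run at the tuning level $\lambdatrex$. The key preliminary observation is that the choice $\lambdatrex=\max\{2\hat u,8\xte/c\}$ is admissible as a LASSO tuning parameter: since $c\in(0,2)$ we have $8/c>4$, hence $\lambdatrex\ge 8\xte/c>4\xte\ge 2\xte$. Thus the hypothesis $\lest\ge 2\xte$ required by Corollary~\ref{rateslassotuningfast} (and by the basic inequality behind Lemma~\ref{fastratelasso}) is automatically met with $\lest=\lambdatrex$, and the same inequality $\xte\le c\lambdatrex/8<\lambdatrex/4$ will later convert the noise factor in the second term into a multiple of $\lambdatrex$.

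First I would apply Theorem~\ref{trexfast}, whose assumptions coincide with those of the corollary, to reduce $\|X\trex-X\bt\|_2^2/n$ to
\[
\frac{3\|X\lassoat-X\bt\|_2^2}{4n}+\frac{7\xte\,\|\lassoat-\bt\|_1}{2n}.
\]
Writing $\eta:=\lassoat-\bt$, the first summand is handled directly by the fast-rate prediction bound for the LASSO at $\lambda=\lambdatrex$. For the second summand I would reproduce the LASSO basic inequality to extract the companion $\ell_1$-estimation bound, which is not stated separately in the excerpt: starting from optimality of $\lassoat$, substituting $Y=X\bt+\varepsilon$, bounding the cross term $2\varepsilon^{\top}X\eta\le 2\xte\|\eta\|_1\le\lambdatrex\|\eta\|_1$ via $\xte\le\lambdatrex/2$, one obtains $\|X\eta\|_2^2+\lambdatrex\|\eta_{S^c}\|_1\le 3\lambdatrex\|\eta_S\|_1$. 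This shows $\eta$ lies in the cone $\{\|\eta_{S^c}\|_1\le 3\|\eta_S\|_1\}$, so the compatibility condition~\eqref{compcond} applies; adding $\lambdatrex\|\eta_S\|_1$ and using~\eqref{compcond} together with AM--GM yields $\|\eta\|_1\le 4\lambdatrex s/(\nu^2 n)$, while the same chain gives the prediction estimate $\|X\eta\|_2^2/n\le 9\lambdatrex^2 s/(\nu^2 n^2)$.

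The final step is to assemble the two contributions and track the constant. The main subtlety I would be careful about is precisely this bookkeeping: one cannot feed the loose constant $16$ of Corollary~\ref{rateslassotuningfast} into the first term, because $\tfrac34\cdot 16=12$ already saturates the target and leaves no room for the strictly positive estimation term. The argument must instead use the sharper constants that the combined basic-inequality analysis actually delivers. With those, the prediction term contributes $\tfrac34\cdot 9=\tfrac{27}{4}$ times $s\lambdatrex^2/(\nu^2 n^2)$, while the estimation term, after bounding $\xte<\lambdatrex/4$ and $\|\eta\|_1\le 4\lambdatrex s/(\nu^2 n)$, contributes at most $\tfrac72\cdot\tfrac14\cdot 4=\tfrac72$ times the same quantity. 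Summing gives $\tfrac{27}{4}+\tfrac72=\tfrac{41}{4}\le 12$, so collecting constants yields $\|X\trex-X\bt\|_2^2/n\le 12 s\lambdatrex^2/(\nu^2 n^2)$, which is the claim.
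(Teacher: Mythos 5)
Your proof is correct, and it follows the same overall strategy as the paper's: apply Theorem~\ref{trexfast}, note that $\lambdatrex\ge 8\xte/c>4\xte$ makes $\lambdatrex$ an admissible LASSO tuning parameter, and then control the two resulting terms with a fast-rate LASSO analysis at level $\lambdatrex$. You also correctly isolate the one genuine subtlety, namely that $\tfrac34\cdot 16=12$ leaves no slack for the $\ell_1$-estimation term, so the constant-$16$ prediction bound of Corollary~\ref{rateslassotuningfast} cannot simply be inserted into the first term. Where you diverge from the paper is in how this is resolved. You re-derive two \emph{separate} sharper bounds from the basic inequality, $\|X\lassoat-X\bt\|_2^2\le 9s\lambdatrex^2/(\nu^2 n)$ and $\|\lassoat-\bt\|_1\le 4s\lambdatrex/(\nu^2 n)$ (both of which check out under the cone and compatibility conditions), and sum the contributions to get $\tfrac{27}{4}+\tfrac72=\tfrac{41}{4}\le 12$. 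The paper (via Corollary~\ref{cor:fastTgen}) instead factors the right-hand side of Theorem~\ref{trexfast} as $\tfrac34\bigl[\|X\lassoat-X\bt\|_2^2+\tfrac{7c}{12}\lambdatrex\|\lassoat-\bt\|_1\bigr]$, observes that $\tfrac{7c}{12}\le 2$ for all $c\in(0,2)$, and then invokes in a single stroke the \emph{combined} oracle inequality $\|X\lassoat-X\bt\|_2^2+2\lambdatrex\|\lassoat-\bt\|_1\le 16s\lambdatrex^2/(\nu^2 n)$ of \cite[Theorem~6.1]{Buhlmann11}; the point there is that the combined bound carries the same constant $16$ as the pure prediction bound, so the $\ell_1$ term is absorbed at no cost. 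Your route is slightly more self-contained (it does not rely on the combined form of the cited theorem) and in fact yields the marginally better constant $\tfrac{41}{4}$; the paper's route is shorter and extends more transparently to the general $(\kappa_1,\kappa_2)$ version, where the admissibility condition on $(\kappa_1,\kappa_2,c)$ is precisely the requirement that the coefficient of $\|\lassoat-\bt\|_1$ after factoring be at most $2\lambdatrex$.
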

This corollary clearly resembles Corollary~\ref{rateslassotuningfast} for the LASSO, yet a significant distinction is that for the LASSO result to be applicable, one would also need a result showing that the data-driven choice of the tuning parameter exceeds $2\|X^\top\varepsilon\|_\infty$.  By contrast, Corollary~\ref{cor:fastT} holds for the TREX estimator exactly as it would be used in practice.  Whether Corollary~\ref{cor:fastT} represents a tight bound for the prediction error of the TREX is another matter:  when $2 \hat u\le 8\|X^\top \varepsilon\|_\infty/c$, the bound behaves like the bound in Corollary~\ref{rateslassotuningfast} with the best choice of $\tilde\lambda$; when instead $2 \hat u> 8\|X^\top \varepsilon\|_\infty/c$, the bound is not necessarily tight. We discuss this in more detail further below.

Next, we bound the prediction loss of the TREX directly. Specifically, we are interested in deriving a so-called ``slow-rate bound,'' which allows for potentially large correlations in~$X$ (see discussion in Section~\ref{s:lassor}). To derive this bound, we use a different assumption.
\begin{assumption}\label{signalstrengthlow} The regression vector $\bt$ is sufficiently large such that
    \begin{equation*}
  \|X^{\top}X\bt\|_{\infty}\ge \Big(1+ \frac{2}{\consttrex}\Big)\|X^{\top}\varepsilon\|_{\infty}.    
    \end{equation*}
 \end{assumption}
 Note that this assumption implies $\|X^{\top}Y\|_{\infty}\ge 2\|X^{\top}\varepsilon\|_{\infty}/\consttrex$ via the triangle inequality.  
\newjl{In the orthogonal case, in which $X^{\top}X=n\operatorname{I}_n$, it holds that $\|X^{\top}X\bt\|_{\infty}=n\|\bt\|_\infty;$ more generally, under the $\ell_\infty$-restricted eigenvalue condition~\cite[Equation~(7) on Page~6]{Chichignoud_Lederer_Wainwright14}, it holds that $\|X^{\top}X\bt\|_{\infty}\geq g n\|\bt\|_\infty$ for some constant~$g$. Then, in the case of i.i.d. Gaussian noise, a sufficient condition for Assumption~\ref{signalstrengthlow} is  that $\|\bt\|_\infty$ of larger order in $n$ than $\sigma\sqrt{(\log p)/n}$.
The assumption can, therefore, be interpreted as a condition on the minimal signal strength, and would naturally hold in the standard high-dimensional limit of $(\log p)/n\to 0$.}

\begin{theorem}\label{slowratetrex} Let Assumption~\ref{signalstrengthlow} be fulfilled, and let $\hat u$ be as in~\eqref{eq:hatu}. If $\hat u\le \|X^\top Y\|_\infty$, then the prediction loss of the TREX satisfies
\begin{equation*}
\frac{\|X\trex-X\betatrue\|_{2}^{2}}{n}\le \frac{\left(2\|X^{\top}\varepsilon\|_{\infty}+\max\left\{\hat u,2\|X^{\top}\varepsilon\|_{\infty}/c\right\}\right)\|\betatrue\|_{1}}{n}.
\end{equation*}
\end{theorem}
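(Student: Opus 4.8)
The plan is to derive a ``basic inequality'' for the TREX from its optimality, mirroring the slow-rate argument for the LASSO (Lemma~\ref{slowratelasso}) and, more closely, the one for the square-root LASSO. Since the TREX carries no explicit tuning parameter, the role of the condition $\lambda\geq\xte$ will be played by a first-order stationarity condition. Concretely, I would first exploit the structure that the (non-convex) TREX objective is a minimum of $2p$ convex sub-problems: at the minimizer $\trex$ there is an active index $j^\ast$ and sign $s^\ast$ with $s^\ast x_{j^\ast}^\top(Y-X\trex)=\|X^\top(Y-X\trex)\|_\infty=\hat u$, and on the region where this coordinate attains the maximum the objective, with the $\ell_\infty$-norm replaced by the linear map $\ell(\beta):=s^\ast x_{j^\ast}^\top(Y-X\beta)$, is convex and is globally minimized by $\trex$ over $\{\ell>0\}$ (the hypothesis $\hat u\le\|X^\top Y\|_\infty$ guarantees $\ell(\trex)=\hat u>0$). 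Writing stationarity then yields a subgradient $z\in\partial\|\trex\|_1$ (so $\|z\|_\infty\le1$ and $z^\top\trex=\|\trex\|_1$) with
\begin{equation*}
X^\top(Y-X\trex)=\frac{c\hat u}{2}\,z+\frac{\|Y-X\trex\|_2^2}{2\hat u}\,s^\ast X^\top x_{j^\ast}.
\end{equation*}
This is exactly a square-root-LASSO-type optimality relation with effective tuning $c\hat u/2$, except for the last term, which comes from differentiating the non-smooth $\ell_\infty$ denominator.

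Next I would set $\hat\Delta:=\trex-\betatrue$ and use the elementary identity $X\hat\Delta=\varepsilon-(Y-X\trex)$ to write
\begin{equation*}
\|X\trex-X\betatrue\|_2^2=\varepsilon^\top X\hat\Delta-\big(X^\top(Y-X\trex)\big)^\top\hat\Delta.
\end{equation*}
Substituting the stationarity relation into the second term, using $z^\top\trex=\|\trex\|_1$ and $z^\top\betatrue\le\|\betatrue\|_1$, and bounding $\varepsilon^\top X\hat\Delta\le\xte\|\hat\Delta\|_1\le\xte(\|\trex\|_1+\|\betatrue\|_1)$ by H\"older, I expect to reach
\begin{equation*}
\|X\trex-X\betatrue\|_2^2\le\Big(\xte-\tfrac{c\hat u}{2}\Big)\|\trex\|_1+\Big(\xte+\tfrac{c\hat u}{2}\Big)\|\betatrue\|_1+\mathcal R,
\end{equation*}
where $\mathcal R=-\frac{\|Y-X\trex\|_2^2}{2\hat u}\,s^\ast x_{j^\ast}^\top X\hat\Delta$ is the extra contribution of the denominator term. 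The coefficient of $\|\trex\|_1$ is nonpositive precisely when $\hat u\ge 2\xte/c$, and this dichotomy is what produces the $\max\{\hat u,2\xte/c\}$ in the statement; when $\hat u<2\xte/c$ the surviving positive multiple of $\|\trex\|_1$ must be absorbed, which is where the signal-strength bound (Assumption~\ref{signalstrengthlow}, equivalently $\|X^\top Y\|_\infty\ge 2\xte/c$) enters.

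To control $\mathcal R$ I would read off a sharp residual size from the $j^\ast$-coordinate of the stationarity relation: since $\big(X^\top(Y-X\trex)\big)_{j^\ast}=s^\ast\hat u$ and $\|x_{j^\ast}\|_2^2=n$, the relation forces
\begin{equation*}
\|Y-X\trex\|_2^2=\frac{2\hat u^2}{n}\Big(1-\tfrac{c}{2}\,s^\ast z_{j^\ast}\Big)\le\frac{(2+c)\hat u^2}{n}.
\end{equation*}
Combined with the exact value $s^\ast x_{j^\ast}^\top X\hat\Delta=s^\ast x_{j^\ast}^\top\varepsilon-\hat u$ (whence $|s^\ast x_{j^\ast}^\top X\hat\Delta|\le \hat u+\xte$), this shows $\mathcal R$ is of order $\hat u^2/n$ and, once the signal-strength assumption ties $\hat u$ to the signal through $\hat u\le\|X^\top Y\|_\infty$, is dominated by a multiple of $\hat u\,\|\betatrue\|_1$. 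Collecting the $\|\betatrue\|_1$-terms and dividing by $n$ should then deliver the stated bound, with the $2\xte$ coming from the H\"older step together with $\mathcal R$, and $\max\{\hat u,2\xte/c\}$ from the sign of the $\|\trex\|_1$-coefficient.

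The main obstacle, and where I expect the real work to lie, is the term $\mathcal R$: it has no analogue in the square-root-LASSO argument and is genuinely of size $\hat u^2/n$ rather than $O(\xte\|\hat\Delta\|_1)$, so it cannot be thrown away by a crude H\"older estimate. Making it fit the clean coefficient $\max\{\hat u,2\xte/c\}$ requires combining the exact residual identity above with Assumption~\ref{signalstrengthlow}, while simultaneously handling the non-positivity of the $\|\trex\|_1$-coefficient through the two cases $\hat u\gtrless 2\xte/c$; arranging these so that no stray $\hat u^2/n$ term survives, and so that the constants line up exactly, is the delicate part of the proof.
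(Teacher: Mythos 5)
Your route is genuinely different from the paper's, and as written it has a gap that I do not think closes with the stated constants. The stationarity relation you derive from the active convex sub-problem is correct, and so is the residual identity $\|Y-X\trex\|_2^2=\tfrac{2\hat u^2}{n}\bigl(1-\tfrac c2 s^\ast z_{j^\ast}\bigr)$. But the term $\mathcal R$ is not merely ``delicate'': writing $s^\ast x_{j^\ast}^\top X(\trex-\bt)=s^\ast x_{j^\ast}^\top\varepsilon-\hat u$ gives $\mathcal R=\tfrac{\|Y-X\trex\|_2^2}{2\hat u}\bigl(\hat u-s^\ast x_{j^\ast}^\top\varepsilon\bigr)\ge\tfrac{(2-c)\hat u(\hat u-\xte)}{2n}$, so in the regime $\hat u\gg\xte$ it is genuinely positive and of order $\hat u^2/n$. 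The only handle for converting $\hat u^2/n$ into $\hat u\|\bt\|_1$ is $\hat u\le\|X^\top Y\|_\infty\le\|X^\top X\bt\|_\infty+\xte\le n\|\bt\|_1+\xte$, which under Assumption~\ref{signalstrengthlow} yields $\hat u\le\tfrac{2+2c}{2+c}\,n\|\bt\|_1$; feeding this back, $\mathcal R$ can contribute up to about $\tfrac{(2-c)(1+c)}{2+c}\,\hat u\|\bt\|_1$, which together with the explicit $\tfrac{c\hat u}{2}\|\bt\|_1$ from your main display already exceeds the coefficient $1$ on $\hat u\|\bt\|_1$ that the theorem allows (at $c=1/2$ this is roughly $0.9+0.25$). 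So the plan, with the estimates you propose, cannot recover the claimed inequality; some new idea is needed. Separately, in the case $\hat u<2\xte/c$ you must absorb the positive coefficient $\xte-\tfrac{c\hat u}{2}$ on $\|\trex\|_1$, and you never supply an upper bound on $\|\trex\|_1$.

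The paper's proof never touches the KKT conditions. It simply compares the objective value at $\trex$ with the value at $\beta=0$, which gives $\tfrac{\|Y-X\trex\|_2^2}{c\hat u}+\|\trex\|_1\le\tfrac{\|Y\|_2^2}{c\|X^\top Y\|_\infty}$; it then expands both sides with $Y=X\bt+\varepsilon$, applies H\"older and the triangle inequality, and splits into the two cases $2\xte\le c\hat u$ and $2\xte\ge c\hat u$ --- which is exactly where $\max\{\hat u,2\xte/c\}$ comes from. Assumption~\ref{signalstrengthlow} enters only through $\|X^\top Y\|_\infty\ge\|X^\top X\bt\|_\infty-\xte\ge 2\xte/c$ and through bounding the ratio $\tfrac{\|X^\top X\bt\|_\infty-2\xte}{\|X^\top(X\bt+\varepsilon)\|_\infty}$ by $1$. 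Note that this zero-comparison inequality is precisely the missing bound on $\|\trex\|_1$ in your second case; if you want to salvage the stationarity route you will need it anyway, plus a sharper link between $\hat u^2/n$ and $\hat u\|\bt\|_1$ than $\|X^\top X\bt\|_\infty\le n\|\bt\|_1$.
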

\newjl{This result is closely related to Corollary~\ref{rateslassotuningslow}. 
In particular, since Corollary~\ref{rateslassotuningslow} requires $\lest\geq \xte$, the right-hand sides in the bounds of Theorem~\ref{slowratetrex} and of Corollary~\ref{rateslassotuningslow} resemble each other closely\footnote{\footnotesize The right-hand side in Corollary~\ref{rateslassotuningslow} has a minimal magnitude of $4\xte \|\bt\|_1/n$, while the right-hand side in Theorem~\ref{slowratetrex} has a minimal magnitude of $(2+2/c)\xte \|\bt\|_1/n$.}.
The rates in both results are, as discussed, ``slow-rates'' of order at least $1/\sqrt n$.  
The major difference between the two results are the assumptions on $\lest$ and $\hat u$:
While the assumption $\lest\geq \|X^\top\varepsilon\|_\infty$ depends on quantities that are unknown in practice, the assumption $\hat u\le \|X^\top Y\|_\infty$ depends on the data only. This means that in practice, one can check if $\hat u$ satisfies the requirements. Alternatively, one can add the convex constraint~$\|X^\top (Y-X\beta)\|_\infty\le \|X^\top Y\|_\infty$ directly into the optimization procedure of the TREX, which automatically ensures---without changing the proofs---that $\hat u\le \|X^\top Y\|_\infty$. 
Having verifiable assumptions is an advantage of Theorem~\ref{slowratetrex} over Corollary~\ref{rateslassotuningslow}; however, a price paid is that the bound in Theorem~\ref{slowratetrex} may be large if $\hat u\gg \|X^\top\varepsilon\|_\infty$.
 Note finally that as both results are oracle inequalities, the bounds themselves are not known in practice.}

We now discuss the conditions imposed by Assumptions~\ref{signalstrengthup} and~\ref{signalstrengthlow}. While each of the derived prediction bounds relies on only one of these assumptions, it is of interest to see whether these assumptions can hold at the same time. Note that for standard noise distributions, it holds that $\|\varepsilon\|_2^2$ is of order~$n$, and classical entropy bounds ensure that $\|X^{\top}\varepsilon\|_{\infty}$ is of order \newjl{$\sigma\sqrt{n\log p}$ } (recall the normalization of~$X$). Moreover, \newjl{for near-orthogonal design (in the sense of the $\ell_\infty$-restricted eigenvalue, for example), it holds that $\|X^{\top}X\bt\|_{\infty}$ is lower bounded by $n\|\bt\|_\infty$ up to constants. As such, at a high level, Assumptions~\ref{signalstrengthup} and~\ref{signalstrengthlow} together imply $\sigma\sqrt{(\log p)/n}/f\leq \|\bt\|_\infty\leq\|\bt\|_1\leq f \sigma\sqrt{n/\log p}$ for a constant $f$, } which are mild conditions on the signal~$\bt$ when $n$ is sufficiently large. For small~$n$, however, the constants in Assumptions~\ref{signalstrengthup} and~\ref{signalstrengthlow} become relevant and can be too large. Conditions with smaller constants can be found in the Appendix, and we expect that the constants can be further decreased. However, the key point is that the restrictions imposed on the model are very mild if $n$ is reasonably large relative to $\log p$. Still, an open conceptual question is whether it is possible to develop theories (for any estimator!) that do without assumptions on the signal strength altogether.

A more stringent limitation of the above inequalities is the dependence on~$\hat u.$ Since tight upper bounds for~$\hat u$ are currently lacking, it is not clear if the inequalities provide an optimal control of the prediction errors. However, we argue that our results still improve the existing theory for sparse linear regression substantially. We first note that  known LASSO prediction bounds do not involve estimated quantities only because these bounds do not address tuning parameter calibration altogether. When the LASSO is combined with a calibration scheme, the bounds do in fact involve  estimated tuning parameters (see Corollaries 3 and 4) that have eluded any statistical analysis so far. Our bounds for the TREX provide a better control because they involve the maximum of $\hat u$ and the optimal bound, thus providing a lower bound on the relevant quantities. 

\newjl{Similar comments apply to the comparison of our TREX theory with the theories of the \sqrtlassotext/\scaledlassotext\ and \linselecttext:
since the three latter methods all invoke (functions of) the normalized noise $\noisesep$ in their tuning schemes, they require additional calibration unless (the distribution of)~$\noisesep$ is known.
However, when sharp tail bounds for (the appropriate functions of) $\noisesep$ are known, these methods are currently equipped with more comprehensive theories than what is presented here (and in the \lassotext-literature),  and unlike the results here, they then also allow for bounds in probability~\cite[Proposition~4.2 and~4.3]{Giraud12}. 
}

\newjl{As for calibration schemes, we mention cross-validation as the most popular one. Standard $k$-fold cross-validation can provide unbiased estimation of a prediction risk~\cite[Page~57]{Arlot10}. However, this hinges on i.i.d. data and the risk is in out-of-sample prediction (as compared to the in-sample prediction considered here) for a sample size smaller than~$n$ (since cross-validation involves data splitting). A comprehensive overview of cross-validation theory can be found in~\cite{Arlot10}, further ideas in~\cite{Arlot11} and others. Nevertheless,  there are currently no non-asymptotic guarantees for \lassotext-type estimators calibrated by $k$-fold cross-validation.}

In summary, the presented prediction bounds for the TREX avoid the problem of ``too small'' tuning parameters that is present for \newjls{\lassotext-type } bounds (see Section~\ref{s:lassor}). Thus, the TREX prediction bounds provide an advancement over the  theory for the LASSO (and similarly, the \sqrtlassotext, the scaled LASSO, MCP, and other penalized methods) combined with cross-validation, information criteria, or other standard means for tuning parameter selection. However, the inequalities do not solve the problem of ``too large'' tuning parameters, yet. \newjl{Being able to bound $\hat u$ in terms of $\|X^\top \varepsilon\|_\infty$, for example, would allow us to extend our results in this direction. } Thus, our results provide a not yet complete but certainly improved theoretical framework for sparse linear regression.    

\section{Generalizations to General Norm Penalties}\label{sec:generalization} \label{sec:beyond-l1-norm}
The $\ell_1$-norm in the penalty of the LASSO and the TREX reflects the assumption that the regression vector is sparse. However, many applications involve more complex structures for~$\bt$. In this section, we discuss generalizations of the TREX objective function to general norm penalities. For this, we consider general norms $\Omega$ on $\R^p$ with corresponding dual norms defined by $\Omega^*(\eta):=\sup\{\eta^\top\beta: \Omega(\beta)\le 1\}$. A generalized version of the TREX can then be formulated as
\begin{align}
  \min_{\beta\in\rp}\left\{\frac{\|Y-X\beta\|^2_2}{\consttrex\Omega^*(X^\top(Y-X\beta))}+\Omega(\beta)\right\},\label{eq:genTREX}
\end{align}
assuming that a minimum exists.  An example is the {\em group TREX} that uses the norm penalty $\Omega(\beta):=\sum_{G\in\mathcal G}w_G\|\beta_G\|_2$   for a given partition $\mathcal G$ of $\{1,\ldots,p\}$ and given weights $w_1,\dots,w_{|\mathcal G|}>0.$ Explicitly, we define the group TREX as
\begin{align*}
  \min_{\beta\in\rp}\left\{\frac{\|Y-X\beta\|^2_2}{\consttrex\max_{G\in\mathcal G}\{\|X_G^\top(Y-X\beta)\|_2/w_G\}}+\sum_{G\in\mathcal G}w_G\|\beta_G\|_2\right\}.
\end{align*}
The group TREX is the TREX analog to the well-known group LASSO \citep{YuanLin06} and group square-root LASSO \citep{Yoyo13}.  In the special case in which all groups are of size one, that is, $\mathcal G=\{1,\dots,p\}$, the group TREX reduces to a weighted version of the standard TREX problem: 
\begin{align*}
  \min_{\beta\in\rp}\left\{\frac{\|Y-X\beta\|^2_2}{\consttrex\max_{j\in\{1,\dots,p\}}\{|X_j^\top(Y-X\beta)|/w_j\}}+\sum_{j\in\{1,\dots,p\}}w_j|\beta_j|\right\}.
\end{align*}
%
The weighting can be further extended to allow for completely unpenalized coordinates.  Unpenalized predictors arise commonly in practice when one wants to allow for an  intercept or has prior knowledge that certain predictors should be included in the fitted model. To incorporate unpenalized predictors in the TREX, we denote the set of indices corresponding to the unpenalized predictors by $\mathcal U\subseteq\{1,\ldots,p\}$ and the complementary set by $\mathcal P:=\{1,\ldots,p\}\setminus\mathcal U$. Let us, with some abuse of notation, denote the generalized TREX by~$\trex$. The unpenalized predictors are determined by forcing the estimator~$\trex$ to satisfy
\begin{equation*}\label{eq:unpenalized}
  X^\top_{\mathcal U}(Y-X\est)=0.
\end{equation*}
The subscript indicates that only the columns of a matrix (or the entries of a vector) in the corresponding set are considered. In the special case of an intercept, where $U=\{1\}$ and $x_1=(1,\dots,1)^\top$, this reads
\begin{equation*}
  (1,\dots,1)^\top(Y-X\est)=0.
\end{equation*}
This is the identical constraint for the standard LASSO with intercept. More generally, \eqref{eq:unpenalized} implies
\begin{align*}
  \est_{\mathcal U}&=(X^\top_{\mathcal U}X_{\mathcal U})^{+}X^\top_{\mathcal U}(Y-X_{\mathcal P}\est_{\mathcal P}),
\end{align*}
where $(X^\top_{\mathcal U}X_{\mathcal U})^{+}$ is a generalized inverse of $X^\top_{\mathcal U}X_{\mathcal U}$. Incorporating this constraint in the TREX criterion yields the estimator
\begin{align}\label{trexprior}
  \begin{split}
&\est_{\mathcal P}:= \min_{\beta_{\mathcal P}\in\R^{|\mathcal P|}}\left\{\frac{\|M_{\mathcal U}(Y-X_{\mathcal P}\beta_{\mathcal P})\|^2_2}{\consttrex\Omega^*(X_{\mathcal P}^\top M_{\mathcal U}(Y-X_{\mathcal P}\beta_{\mathcal P}))}+\Omega(\beta_{\mathcal P})\right\}\\
&\est_{\mathcal U}:=(X^\top_{\mathcal U}X_{\mathcal U})^{+}X^\top_{\mathcal U}(Y-X_{\mathcal P}\est_{\mathcal P}),
  \end{split}
\end{align}
where $M_{\mathcal U}:=(\operatorname{I}-X_{\mathcal U}(X^\top_{\mathcal U}X_{\mathcal U})^{+}X^\top_{\mathcal U})$ is the projection on the orthogonal complement of the span of the columns with indices in $\mathcal U$. Note that the estimator~\eqref{trexprior} simplifies to the least-squares estimator if $\mathcal U=\{1,\dots, p\}$ and simplifies to  \eqref{eq:genTREX} for $\mathcal U=\emptyset$.  More generally,  $\est_{\mathcal P}$ can be considered as a (generalized) TREX on the output not explained by a least-squares of $Y$ on $X_{\mathcal U}$. Indeed, if the columns with indices in $\mathcal U$ are orthogonal to the columns with indices in $\mathcal P$, the estimator~\eqref{trexprior} simplifies to 
\begin{align*}
&\est_{\mathcal P}:= \min_{\beta_{\mathcal P}\in\R^{|\mathcal P|}}\left\{\frac{\|M_{\mathcal U}Y-X_{\mathcal P}\beta_{\mathcal P}\|^2_2}{\consttrex \Omega^*(X_{\mathcal P}^\top (M_{\mathcal U}Y-X_{\mathcal P}\beta_{\mathcal P}))}+\Omega(\beta_{\mathcal P})\right\}\\
&\est_{\mathcal U}:=(X^\top_{\mathcal U}X_{\mathcal U})^{+}X^\top_{\mathcal U}Y.
\end{align*}
In this case, the generalized TREX combines  the TREX on $(M_{\mathcal U}Y,X_{\mathcal P})$ and  least-squares estimation on~$(Y,X_\mathcal U)$. Theorem~\ref{slowratetrex} can be extended in this generalized setting.
 
  \begin{assumption}\label{signalstrengthlow_c1} The regression vector $\bt$ is sufficiently large such that
    \begin{equation*}
  \Omega^{*}(X^{\top}X\bt)\ge \left(1+\frac2{\consttrextwo}\right)\Omega^{*}(X^{\top}\varepsilon).    
    \end{equation*}
 \end{assumption}

\begin{theorem}\label{res:bound}  Let Assumption~\ref{signalstrengthlow_c1} be fulfilled and define $\hat u :=\Omega^*(X^\top(Y-X\trex))$. If $\hat u \le \Omega^*(X^\top Y)$, then the prediction loss of TREX satisfies
\begin{equation*}
\frac{\|X\trex-X\betatrue\|_{2}^{2}}{n}\le \frac{\left(2\Omega^{*}(X^{\top}\varepsilon)+\max\left\{\hat u,\frac2{\consttrextwo}\Omega^{*}(X^{\top}\varepsilon)\right\}\right)\Omega(\betatrue)}{n}.
\end{equation*}
\end{theorem}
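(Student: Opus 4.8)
The plan is to follow the proof of Theorem~\ref{slowratetrex} (the $\ell_1$ special case) and transport it to the general norm: every step of that ``slow-rate'' argument is phrased through duality, using only the generalized Hölder inequality $a^\top b\le\Omega(a)\,\Omega^*(b)$, the optimality of the TREX minimizer, and the signal-strength hypothesis. Replacing $\|\cdot\|_1$ by $\Omega$ and $\|\cdot\|_\infty$ by $\Omega^*$ throughout, the roles of $\|X^\top\varepsilon\|_\infty$ and of $\hat u$ are taken over by $w:=\Omega^*(X^\top\varepsilon)$ and by $\hat u=\Omega^*(X^\top(Y-X\trex))$, and nothing in the argument is specific to the $\ell_1$-geometry.

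Writing $\hat\beta:=\trex$, $r:=Y-X\hat\beta$ and $\Delta:=\hat\beta-\betatrue$, I would begin from the exact identity $X\Delta=\varepsilon-r$ (immediate from $Y=X\betatrue+\varepsilon$), yielding the decomposition
\[
\|X\Delta\|_2^2=\Delta^\top X^\top\varepsilon-\Delta^\top X^\top r=(\hat\beta-\betatrue)^\top X^\top\varepsilon+\betatrue^\top X^\top r-\hat\beta^\top X^\top r.
\]
Generalized Hölder applied to the terms carrying $\betatrue$ produces contributions of the form $w\,\Omega(\betatrue)$ and $\hat u\,\Omega(\betatrue)$, matching the two main quantities in the claimed bound. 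What then remains is to control the terms carrying $\hat\beta$, namely $\hat\beta^\top X^\top\varepsilon$ and $-\hat\beta^\top X^\top r$, using that $\hat\beta$ minimizes the TREX objective $T(\beta):=\|Y-X\beta\|_2^2/(\consttrex\,\Omega^*(X^\top(Y-X\beta)))+\Omega(\beta)$.

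This last step is the crux, and I expect it to be the main obstacle. The denominator $\Omega^*(X^\top(Y-X\beta))$ makes $T$ non-convex, and a naive comparison $T(\hat\beta)\le T(\betatrue)$ is not usable directly: because the denominators at $\hat\beta$ and at $\betatrue$ differ, such a comparison leaks the term $\|\varepsilon\|_2^2/\Omega^*(X^\top\varepsilon)$, which is of the wrong (fast-rate) order. The clean way around this, which I would adopt, is to invoke~\cite{Bien16}: the global TREX minimizer also solves a convex sub-problem in which $\Omega^*(X^\top(Y-X\beta))$ is replaced by a fixed supporting linear functional $g^\top X^\top(Y-X\beta)$ with $\Omega(g)\le 1$ and $g^\top X^\top r=\hat u$. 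On the affine slice where this functional equals $\hat u$, the TREX criterion reduces to the LASSO-type objective $\|Y-X\beta\|_2^2/(\consttrex\hat u)+\Omega(\beta)$, so that $\hat\beta$ satisfies a first-order condition identical to that of a LASSO with effective tuning parameter $\tfrac{\consttrex}{2}\hat u$. Running the generalized LASSO slow-rate cancellation with this effective parameter lets the $\hat\beta$-terms be absorbed into the penalty precisely when $\tfrac{\consttrex}{2}\hat u\ge w$.

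This is exactly where Assumption~\ref{signalstrengthlow_c1} and the checkable constraint $\hat u\le\Omega^*(X^\top Y)$ enter. The assumption gives $\Omega^*(X^\top Y)\ge\tfrac{2}{\consttrex}w$ by the triangle inequality, so the effective penalty admits the floor $\tfrac{2}{\consttrex}w$, which strictly exceeds $w$ since $\consttrex<2$; this floor is the analog of the slow-rate threshold $\lambda\ge\|X^\top\varepsilon\|_\infty$ for the LASSO. A short case distinction then yields the stated maximum: when $\hat u\ge\tfrac{2}{\consttrex}w$ the data-driven quantity $\hat u$ governs the effective penalty, and when $\hat u<\tfrac{2}{\consttrex}w$ the guaranteed floor $\tfrac{2}{\consttrex}w$ takes over. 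Collecting the noise contributions into at most $2w\,\Omega(\betatrue)$ together with the term $\max\{\hat u,\tfrac{2}{\consttrex}w\}\,\Omega(\betatrue)$ coming from the residual and penalty, and dividing by $n$, gives the claimed bound. The one point requiring genuine care is making the cancellation of the $\hat\beta$-terms rigorous across the affine-slice reduction, in particular controlling the Lagrange-multiplier (constraint-normal) contribution that the slice introduces.
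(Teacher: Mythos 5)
There is a genuine gap at exactly the point you flag as ``the crux,'' and it is not a technicality that careful bookkeeping would fix. Your plan is to compare the TREX objective at $\trex$ with its value at $\betatrue$ (or, equivalently, to extract a LASSO-type first-order condition for $\trex$), and you correctly observe that the mismatched denominators leak a term of order $\|\varepsilon\|_2^2/\Omega^*(X^\top\varepsilon)$. But the repair you propose does not close the leak. First, on the affine slice $\{\beta: g^\top X^\top(Y-X\beta)=\hat u\}$ the minimizer of the convex sub-problem of \citet{Bien16} does \emph{not} satisfy the first-order condition of the LASSO with parameter $\tfrac{\consttrex}{2}\hat u$: differentiating the ratio $\|Y-X\beta\|_2^2/(\consttrex\, g^\top X^\top(Y-X\beta))$ produces, besides $-2X^\top r/(\consttrex\hat u)$, the extra term $\|r\|_2^2\,X^\top Xg/(\consttrex\hat u^2)$, and nothing in your argument (or in the theorem's hypotheses) controls $\Omega^*(X^\top X g)$; this is precisely the ``constraint-normal contribution'' you defer, and it is where the proof would stall. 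Second, your case distinction does not work as stated: Assumption~\ref{signalstrengthlow_c1} gives a lower bound on $\Omega^*(X^\top Y)$, but since the standing constraint is $\hat u\le\Omega^*(X^\top Y)$, this yields no lower bound on $\hat u$, hence no ``floor'' for your effective penalty $\tfrac{\consttrex}{2}\hat u$. In the regime $\hat u<\tfrac{2}{\consttrex}\Omega^*(X^\top\varepsilon)$ the effective penalty sits \emph{below} the slow-rate threshold $\Omega^*(X^\top\varepsilon)$, the cancellation of the cross terms fails, and you offer no alternative mechanism by which ``the guaranteed floor takes over.''

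The paper avoids both problems with a much more elementary device that your proposal misses: it compares the TREX objective at $\trex$ not with its value at $\betatrue$ but with its value at $\beta=0$, giving
\begin{equation*}
\frac{\|Y-X\trex\|_2^2}{\consttrex\,\hat u}+\Omega(\trex)\le\frac{\|Y\|_2^2}{\consttrex\,\Omega^*(X^\top Y)} .
\end{equation*}
After expanding via the model, the coefficient of $\|\varepsilon\|_2^2$ becomes $\hat u/\Omega^*(X^\top Y)-1\le 0$ by the hypothesis $\hat u\le\Omega^*(X^\top Y)$, so the dangerous fast-rate term can simply be discarded rather than cancelled; the term $\|X\betatrue\|_2^2$ is handled by H\"older as $\Omega^*(X^\top X\betatrue)\,\Omega(\betatrue)$ and the ratio $\Omega^*(X^\top X\betatrue)/\Omega^*(X^\top Y)$ is tamed by the triangle inequality. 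The case split $2\Omega^*(X^\top\varepsilon)\lessgtr\consttrex\hat u$ then decides whether the penalty term $-\consttrex\hat u\,\Omega(\trex)$ can be dropped outright (Case 1) or whether the bound $\Omega(\trex)\le\|Y\|_2^2/(\consttrex\,\Omega^*(X^\top Y))$ must be re-substituted, with Assumption~\ref{signalstrengthlow_c1} supplying $\Omega^*(X^\top Y)\ge 2\Omega^*(X^\top\varepsilon)/\consttrex$ (Case 2). No KKT conditions, no sub-problem decomposition, and no comparison with $\betatrue$ are needed. To salvage your write-up you would need either to bound the curvature term $\|r\|_2^2\,X^\top Xg/\hat u^2$ and supply a genuine argument for the small-$\hat u$ regime, or to switch to the zero-comparison; as it stands the central step is asserted rather than proved.
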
 
The bound depends on the size of the vectors $X^{\top}\varepsilon$ and $X^\top(Y-X\trex)$ as measured by the dual norm $\Omega^*$.  In the special case that $\Omega(\beta)=\|\beta\|_1$, $\Omega^*$ is simply the $\ell_\infty$-norm, and this theorem reduces to Theorem~\ref{slowratetrex}.
 We refer to Appendix~\ref{s:slowrate} for the proof details.

\section{Discussion}\label{sec:discussion}

In this paper, we have provided the first prediction error bounds for the TREX and for a newly proposed generalization of the TREX.  Our theoretical guarantees resemble known bounds for the LASSO.  However, there is an important practical difference.  The LASSO results hold only if an oracle could guarantee that a sufficiently large tuning parameter has been chosen (in practice, one uses a data-dependent calibration scheme for which such guarantees of a sufficiently large tuning parameter are not available).  By contrast, our TREX results do not have such a requirement and therefore pertain to the precise version of the estimator that is used in practice.


We envision future research in different directions. First, it would be desirable to derive sharp upper bounds for the quantity~$\hat u.$ This would lead to a complete set of guarantees for high-dimensional prediction with finitely many samples. 
\newjl{For this, one might also explore potential connections with the Dantzig selector~\cite{Candes07}, in which the quantity ${\|X^\top(Y-X\beta)\|_\infty}$ is controlled explicitly. }
Moreover, it would be of interest to study the choice of the constant~$c$ further. Numerical results in \citet{LedererMueller:14,Bien16} indicate that the fixed choice $c=1/2$ works in a variety of settings. While the presented theory covers all values of $c \in (0, 2)$, it is, however, conceivable that a data-dependent choice of~$c$ may further improve the accuracy of the TREX.
Finally, it would be interesting to extend our theoretical results to tasks beyond prediction, such as estimation and support recovery.

\newjl{\begin{acknowledgements}
We sincerely thank the editor and the reviewers for their insightful comments.
\end{acknowledgements}}

\bibliographystyle{spmpsci}      

\newjls{\bibliography{References}}

\begin{thebibliography}{10}
\providecommand{\url}[1]{{#1}}
\providecommand{\urlprefix}{URL }
\expandafter\ifx\csname urlstyle\endcsname\relax
  \providecommand{\doi}[1]{DOI~\discretionary{}{}{}#1}\else
  \providecommand{\doi}{DOI~\discretionary{}{}{}\begingroup
  \urlstyle{rm}\Url}\fi

\bibitem{Arlot10}
Arlot, S., Celisse, A.: A survey of cross-validation procedures for model
  selection.
\newblock Statistics Surveys \textbf{4}, 40--79 (2010)

\bibitem{Arlot11}
Arlot, S., Celisse, A.: Segmentation of the mean of heteroscedastic data via
  cross-validation.
\newblock Statistics and Computing \textbf{21}(4), 613--632 (2011)

\bibitem{Bach08}
Bach, F.: {Bolasso: Model Consistent Lasso Estimation Through the Bootstrap}.
\newblock In: Proceedings of the 25th International Conference on Machine
  Learning, pp. 33--40 (2008)

\bibitem{Baraud09}
Baraud, Y., Giraud, C., Huet, S.: Gaussian model selection with an unknown
  variance.
\newblock Ann. Statist. \textbf{37}(2), 630--672 (2009)

\bibitem{Barber2014a}
Barber, R., Cand{\`e}s, E.: {Controlling the False Discovery Rate via
  Knockoffs}.
\newblock Ann. Statist. \textbf{43}(5), 2055--2085 (2015)

\bibitem{sqrtlasso}
Belloni, A., Chernozhukov, V., Wang, L.: Square-root lasso: pivotal recovery of
  sparse signals via conic programming.
\newblock Biometrika \textbf{98}(4), 791--806 (2011)

\bibitem{Bickel09}
Bickel, P., Ritov, Y., Tsybakov, A.: Simultaneous analysis of lasso and
  {D}antzig selector.
\newblock Ann. Statist. \textbf{37}(4), 1705--1732 (2009)

\bibitem{Bien16}
Bien, J., Gaynanova, I., Lederer, J., M{\"u}ller, C.: Non-convex global
  minimization and false discovery rate control for the {TREX}.
\newblock to appear in J. Comput. Graph. Statist  (2017)

\bibitem{Boucheron13}
Boucheron, S., Lugosi, G., Massart, P.: Concentration Inequalities: A
  Nonasymptotic Theory of Independence.
\newblock Oxford University Press (2013)

\bibitem{Buhlmann11}
B{\"u}hlmann, P., {van de Geer}, S.: Statistics for high-dimensional data:
  Methods, theory and applications.
\newblock Springer (2011)

\bibitem{Yoyo13}
Bunea, F., Lederer, J., She, Y.: The {G}roup {S}quare-{R}oot {L}asso:
  {T}heoretical {P}roperties and {F}ast {A}lgorithms.
\newblock IEEE Trans. Inform. Theory \textbf{60}(2), 1313--1325 (2014)

\bibitem{Bunea06}
Bunea, F., Tsybakov, A., Wegkamp, M.: {Aggregation and sparsity via
  $\ell_1$-penalized least squares}.
\newblock In: Proceedings of 19th Annual Conference on Learning Theory, pp.
  379--391 (2006)

\bibitem{Candes09}
Cand{\`e}s, E., Plan, Y.: Near-ideal model selection by {$\ell_1$}
  minimization.
\newblock Ann. Statist. \textbf{37}(5) (2009)

\bibitem{Candes07}
Candes, E., Tao, T.: The {D}antzig selector: Statistical estimation when p is
  much larger than n.
\newblock Ann. Statist. \textbf{35}(6), 2313--2351 (2007)

\bibitem{Chatterjee15}
Chatterjee, S., Jafarov, J.: Prediction error of cross-validated lasso.
\newblock arXiv:1502.06291  (2015)

\bibitem{Chetelat_Lederer_Salmon14}
Ch\'etelat, D., Lederer, J., Salmon, J.: Optimal two-step prediction in
  regression.
\newblock Electron. J. Stat. \textbf{11}(1), 2519--2546 (2017)

\bibitem{Chichignoud_Lederer_Wainwright14}
Chichignoud, M., Lederer, J., Wainwright, M.: A practical scheme and fast
  algorithm to tune the lasso with optimality guarantees.
\newblock J. Mach. Learn. Res. \textbf{17}, 1--20 (2016)

\bibitem{Combettes16}
Combettes, P., M{\"{u}}ller, C.: {Perspective functions: Proximal calculus and
  applications in high-dimensional statistics}.
\newblock J. Math. Anal. Appl. \textbf{457}(2) (2016)

\bibitem{ArnakMoYo14}
Dalalyan, A., Hebiri, M., Lederer, J.: On the {P}rediction {P}erformance of the
  {L}asso.
\newblock Bernoulli \textbf{23}(1), 552--581 (2017)

\bibitem{DT07}
Dalalyan, A., Tsybakov, A.: Aggregation by exponential weighting and sharp
  oracle inequalities.
\newblock In: Proceedings of 19th Annual Conference on Learning Theory, pp.
  97--111 (2007)

\bibitem{DT12a}
Dalalyan, A., Tsybakov, A.: Mirror averaging with sparsity priors.
\newblock Bernoulli \textbf{18}(3), 914--944 (2012)

\bibitem{DT12b}
Dalalyan, A., Tsybakov, A.: Sparse regression learning by aggregation and
  langevin monte-carlo.
\newblock J. Comput. System Sci. \textbf{78}(5), 1423--1443 (2012)

\bibitem{Fan_Li01}
Fan, J., Li, R.: Variable selection via nonconcave penalized likelihood and its
  oracle properties.
\newblock J. Amer. Statist. Assoc. \textbf{96}(456), 1348--1360 (2001)

\bibitem{vandeGeer00}
van~de Geer, S.: Empirical Processes in M-estimation, vol.~6.
\newblock Cambridge {U}niversity {P}ress (2000)

\bibitem{vandeG07}
van~de Geer, S.: The deterministic {L}asso.
\newblock In JSM proceedings  (2007)

\bibitem{Sara09}
van~de Geer, S., B{\"u}hlmann, P.: On the conditions used to prove oracle
  results for the {L}asso.
\newblock Electron. J. Stat. \textbf{3}, 1360--1392 (2009)

\bibitem{vdGeer11b}
van~de Geer, S., Lederer, J.: The {B}ernstein-{O}rlicz norm and deviation
  inequalities.
\newblock Probab. Theory Related Fields \textbf{157}(1--2), 225--250 (2013)

\bibitem{vdGeer11}
van~de Geer, S., Lederer, J.: The {L}asso, correlated design, and improved
  oracle inequalities.
\newblock IMS Collections \textbf{9}, 303--316 (2013)

\bibitem{Giraud12}
Giraud, C., Huet, S., Verzelen, N.: High-dimensional regression with unknown
  variance.
\newblock Statist. Sci. \textbf{27}(4), 500--518 (2012)

\bibitem{YoyoMomo12}
Hebiri, M., Lederer, J.: How correlations influence {L}asso prediction.
\newblock IEEE Trans. Inform. Theory \textbf{59}(3), 1846--1854 (2013)

\bibitem{HCB08}
Huang, C., Cheang, G., Barron, A.: Risk of {P}enalized {L}east {S}quares,
  {G}reedy {S}election and {L}1 penalization for flexible function libraries
  (2008).
\newblock Manuscript

\bibitem{Kolt10}
Koltchinskii, V., Lounici, K., Tsybakov, A.: Nuclear-norm penalization and
  optimal rates for noisy low-rank matrix completion.
\newblock Ann. Statist. \textbf{39}(5), 2302--2329 (2011)

\bibitem{YoyoSara12}
Lederer, J., van~de Geer, S.: New concentration inequalities for empirical
  processes.
\newblock Bernoulli \textbf{20}(4) (2014)

\bibitem{LedererMueller14b}
Lederer, J., M{\"u}ller, C.: Topology adaptive graph estimation in high
  dimensions.
\newblock arXiv:1410.7279  (2014)

\bibitem{LedererMueller:14}
Lederer, J., M{\"u}ller, C.: Don't fall for tuning parameters: Tuning-free
  variable selection in high dimensions with the {TREX}.
\newblock In: Proceedings of the Twenty-Ninth AAAI Conference on Artificial
  Intelligence (2015)

\bibitem{Lederer16b}
Lederer, J., Yu, L., Gaynanova, I.: Oracle inequalities for high-dimensional
  prediction.
\newblock arXiv:1608.00624  (2016)

\bibitem{Lim16}
Lim, N., Lederer, J.: Efficient feature selection with large and
  high-dimensional data.
\newblock arXiv:1609.07195  (2016)

\bibitem{MM11}
Massart, P., Meynet, C.: The {L}asso as an $\ell_1$-ball model selection
  procedure.
\newblock Electron. J. Stat. \textbf{5}, 669--687 (2011)

\bibitem{Meinshausen:2010}
Meinshausen, N., B{\"{u}}hlmann, P.: {Stability selection}.
\newblock J. R. Stat. Soc. Ser. B \textbf{72}(4), 417--473 (2010)

\bibitem{Raskutti10}
Raskutti, G., Wainwright, M., Yu, B.: Restricted eigenvalue properties for
  correlated {G}aussian designs.
\newblock J. Mach. Learn. Res. \textbf{11}, 2241--2259 (2010)

\bibitem{RigTsy11}
Rigollet, P., Tsybakov, A.: {E}xponential {S}creening and optimal rates of
  sparse estimation.
\newblock Ann. Statist. \textbf{39}(2), 731--771 (2011)

\bibitem{Sabourin15}
Sabourin, J., Valdar, W., Nobel, A.: A permutation approach for selecting the
  penalty parameter in penalized model selection.
\newblock Biometrics \textbf{71}(4), 1185--1194 (2015)

\bibitem{Shah13}
Shah, R., Samworth, R.: Variable selection with error control: another look at
  stability selection.
\newblock J. Roy. Statist. Soc. Ser. B \textbf{75}(1), 55--80 (2013)

\bibitem{scaledlasso}
Sun, T., Zhang, C.H.: Scaled sparse linear regression.
\newblock Biometrika \textbf{99}(4), 879--898 (2012)

\bibitem{Tibshirani-LASSO}
Tibshirani, R.: Regression shrinkage and selection via the lasso.
\newblock J. Roy. Statist. Soc. Ser. B \textbf{58}(1), 267--288 (1996)

\bibitem{vanderVaart96}
{van der Vaart}, A., Wellner, J.: Weak Convergence and Empirical Processes.
\newblock Springer (1996)

\bibitem{wellner2017bennett}
Wellner, J.: The {B}ennett-{O}rlicz norm.
\newblock Sankhya A \textbf{79}(2), 355--383 (2017)

\bibitem{YuanLin06}
Yuan, M., Lin, Y.: Model selection and estimation in regression with grouped
  variables.
\newblock J. Roy. Statist. Soc. Ser. B \textbf{68}(1), 49--67 (2006)

\bibitem{Zhang10}
Zhang, C.: Nearly unbiased variable selection under minimax concave penalty.
\newblock Ann. Statist. \textbf{38}(2), 894--942 (2010)

\bibitem{Zhuang17}
Zhuang, R., Lederer, J.: Maximum regularized likelihood estimators: A general
  prediction theory and applications.
\newblock arXiv:1710.02950  (2017)

\end{thebibliography}


\appendix

\section{Proof of a Generalization of Theorem~\ref{trexfast}}\label{s:fastrate}
We first consider a generalization of Assumption~\ref{signalstrengthup}.
\begin{assumption}\label{signalstrengthup_kappa}
The signal $\bt$ is sufficiently small such that for some $\kappa_{1}>1$ and $\kappa_{2}>2$ \newjl{with $1/\kappa_1 + 2/\kappa_2 <1$, }
 \begin{align*}
\|\bt\|_1\le\frac14\left(1-\frac1{\kappa_{1}}-\frac2{\kappa_{2}}\right)\frac{\|\varepsilon\|_2^2}{\|X^{\top}\varepsilon\|_{\infty}}.
\end{align*} 
\end{assumption}
As a first step toward the proof of Theorem~\ref{trexfast}, we show that any TREX solution has  larger $\ell_{1}$-norm than any lasso solution with tuning parameter $\lambda=\hat u$.

\begin{lemma}\label{res:l1trexlasso} Any TREX solution \eqref{trexobjective} satisfies
$$\|\trex\|_{1}\ge\|\lasso(\hat u)\|_{1},$$
where $\lasso^{\hat u}$ is any lasso solution as in \eqref{d:lasso} with tuning parameter $\lambda=\hat u$.
\end{lemma}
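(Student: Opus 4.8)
The plan is to pit the optimality of the TREX solution against the optimality of the LASSO solution at $\lambda=\hat u$, using only the two defining minimization properties together with the LASSO stationarity conditions, and crucially the fact that $c<2$. Write $\hat\beta$ for a TREX solution, so that $\hat u=\|X^\top(Y-X\hat\beta)\|_\infty$ by definition, and let $\tilde\beta$ denote a LASSO solution at tuning parameter $\lambda=\hat u$. If $\tilde\beta=0$ then $\|\tilde\beta\|_1=0$ and the claim is immediate, so I would focus on the case $\tilde\beta\neq0$. Here the LASSO KKT conditions give $X^\top(Y-X\tilde\beta)=\hat u\, z$ with $z\in\partial\|\tilde\beta\|_1$, so that $|X_j^\top(Y-X\tilde\beta)|=\hat u$ on every active coordinate and $\le\hat u$ elsewhere; consequently $\|X^\top(Y-X\tilde\beta)\|_\infty=\hat u$. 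This identity is the linchpin, since it makes the denominator of the TREX objective evaluated at $\tilde\beta$ equal to $c\hat u$, which is exactly the denominator appearing at $\hat\beta$.

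With both denominators pinned to $c\hat u$, I would extract two inequalities in the scalar $D:=\|\tilde\beta\|_1-\|\hat\beta\|_1$. Testing minimality of $\tilde\beta$ for the LASSO functional $\|Y-X\beta\|_2^2+2\hat u\|\beta\|_1$ against $\hat\beta$ gives
\[
\|Y-X\hat\beta\|_2^2-\|Y-X\tilde\beta\|_2^2\ge 2\hat u\,D,
\]
while testing minimality of $\hat\beta$ for the TREX functional against $\tilde\beta$, and using the two equal denominators, gives
\[
\|Y-X\hat\beta\|_2^2-\|Y-X\tilde\beta\|_2^2\le c\,\hat u\,D.
\]
Chaining the two yields $2\hat u\,D\le c\,\hat u\,D$, i.e. $(2-c)\hat u\,D\le 0$. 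Since $c\in(0,2)$ and $\hat u>0$, the coefficient $(2-c)\hat u$ is strictly positive, forcing $D\le 0$, which is precisely $\|\hat\beta\|_1\ge\|\tilde\beta\|_1$.

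The main obstacle is the one subtlety flagged above: a priori the LASSO KKT conditions only furnish $\|X^\top(Y-X\tilde\beta)\|_\infty\le\hat u$, and a strict inequality there would weaken the TREX comparison in the wrong direction (the denominator at $\tilde\beta$ would exceed $c\hat u$, so the second inequality would no longer follow). The dichotomy $\tilde\beta=0$ versus $\tilde\beta\neq 0$ resolves this cleanly, since in the nontrivial case at least one active coordinate saturates the bound and forces equality. I would also note that one may assume $\hat u>0$, as the TREX objective is not finite at $\hat\beta$ when its denominator vanishes. It is worth emphasizing that the whole argument is free of any condition on the design, the noise, or the signal, consistent with the unconditional phrasing of the lemma, and that the strict inequality $c<2$ is used in an essential way.
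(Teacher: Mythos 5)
Your proof is correct and follows essentially the same route as the paper's: reduce to the case $\lasso(\hat u)\neq 0$, use the LASSO KKT conditions to pin $\|X^\top(Y-X\lasso(\hat u))\|_\infty=\hat u$ so that both TREX denominators equal $c\hat u$, and then combine the two optimality inequalities to get $(2-c)\hat u\,(\|\lasso(\hat u)\|_1-\|\trex\|_1)\le 0$. Your additional remarks on why the KKT equality (rather than just $\le\hat u$) is essential, and on $\hat u>0$, are accurate refinements of the same argument.
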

\begin{proof}[of Lemma~\ref{res:l1trexlasso}] If $\lasso(\hat u)=0$, the statement holds trivially. Now for $\lasso(\hat u)\neq 0$, the KKT conditions for LASSO imply that 
  \begin{equation*}
    \|X^{\top}(Y-X\lasso(\hat u))\|_{\infty}=\hat u.
  \end{equation*}
Together with the definition of $\trex$, this yields
\begin{equation*}
\|Y-X\trex\|^{2}_{2}+\consttrextwo\hat u\|\trex\|_{1}\le\|Y-X\lasso(\hat u)\|^{2}_{2}+\consttrextwo\hat u\|\lasso(\hat u)\|_{1}.
\end{equation*}
On the other hand, the definition of the LASSO implies 
\begin{equation*}
\|Y-X\lasso(\hat u)\|^{2}_{2}+2\hat u\|\lasso(\hat u)\|_{1}\le \|Y-X\trex\|^{2}_{2}+2\hat u\|\trex\|_{1}.
\end{equation*}
Combining these two displays gives us
\begin{equation*}
(\consttrextwo-2)\hat u\|\trex\|_{1}\le (\consttrextwo-2)\hat u\|\lasso(\hat u)\|_{1}.
\end{equation*}
The claim follows now from $\consttrextwo< 2$.
\end{proof}
We are now ready to prove a generalization of Theorem~\ref{trexfast}.
\begin{theorem}\label{res:fast4}  Let Assumption~\ref{signalstrengthup_kappa} be fulfilled, and let  $\lest:=\max\{\kappa_{1} \hat u,\frac{\kappa_{2}}{\consttrextwo}\|X^\top \varepsilon\|_\infty\}$. Then, for any  $\hat u\leq \|X^\top Y\|_\infty/\kappa_1$, the prediction loss of the TREX satisfies
\begin{equation*}
\|X\trex-X\betatrue\|^{2}_{2}\le  \left(\frac{1}{\kappa_{1}}+\frac{2}{\kappa_{2}}\right) \|X\lassoat-X\bt\|^{2}_{2} +\left(2+\frac{2}{\kappa_{1}}+\frac{4}{\kappa_{2}}\right)\|X^\top\varepsilon\|_\infty\|\lassoat-\bt\|_1.
\end{equation*}
\end{theorem}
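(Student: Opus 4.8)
The plan is to derive the bound by comparing the TREX against the LASSO solution $\lassoat$ inside the TREX objective, converting residual norms into prediction errors, and absorbing the leftover noise terms with Assumption~\ref{signalstrengthup_kappa}. Throughout write $W:=\|X^\top\varepsilon\|_\infty$, $r:=Y-X\trex$, $r_L:=Y-X\lassoat$, and $\rho:=\hat u/\lest$. Since $\lest\ge\kappa_1\hat u$ we have $\rho\le 1/\kappa_1$, and since $\lest\ge\frac{\kappa_2}{c}W$ we have $\frac{2W}{c\lest}\le\frac{2}{\kappa_2}$; these two elementary consequences of the definition of $\lest$ are exactly what will produce the coefficients $1/\kappa_1$ and $2/\kappa_2$ in the statement.

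First I would record the basic TREX inequality. Assuming $\lassoat\neq 0$, the LASSO KKT conditions give $\|X^\top r_L\|_\infty=\lest$, so optimality of $\trex$ for the TREX objective tested against $\lassoat$ yields, after multiplying by $c\hat u$,
\[ \|r\|_2^2+c\hat u\|\trex\|_1\le \rho\|r_L\|_2^2+c\hat u\|\lassoat\|_1 . \]
Expanding $\|r\|_2^2=\|X\trex-X\bt\|_2^2+\|\varepsilon\|_2^2-2\varepsilon^\top X(\trex-\bt)$ and the analogous identity for $\|r_L\|_2^2$, I would rearrange this into
\[ \|X\trex-X\bt\|_2^2\le \rho\|X\lassoat-X\bt\|_2^2-2\rho\,\varepsilon^\top X(\lassoat-\bt)+E+(\rho-1)\|\varepsilon\|_2^2, \]
where $E:=2\varepsilon^\top X(\trex-\bt)+c\hat u(\|\lassoat\|_1-\|\trex\|_1)$ collects everything involving $\trex$. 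The term $-2\rho\,\varepsilon^\top X(\lassoat-\bt)\le 2\rho W\|\lassoat-\bt\|_1\le\frac{2}{\kappa_1}W\|\lassoat-\bt\|_1$ is harmless and already feeds the final $\ell_1$ coefficient.

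The crux, and the main obstacle, is $E$, because it contains $\|\trex\|_1$, for which only a \emph{lower} bound is directly available (Lemma~\ref{res:l1trexlasso} together with monotonicity of $\lambda\mapsto\|\lasso(\lambda)\|_1$, giving $\|\trex\|_1\ge\|\lasso(\hat u)\|_1\ge\|\lassoat\|_1$). Writing $2\varepsilon^\top X(\trex-\bt)=2\varepsilon^\top X\trex-2\varepsilon^\top X\bt$ and using $2\varepsilon^\top X\trex\le 2W\|\trex\|_1$, the decisive quantity is $(2W-c\hat u)\|\trex\|_1$, whose sign I would control by splitting into two cases. If $c\hat u\ge 2W$, then $2\varepsilon^\top X\trex-c\hat u\|\trex\|_1\le 0$, and combining with $\|\trex\|_1\ge\|\lassoat\|_1$ gives $E\le 2W\|\lassoat\|_1+2W\|\bt\|_1$. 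If instead $c\hat u<2W$, I would feed the TREX inequality back in: it gives $c\hat u\|\trex\|_1\le\rho\|r_L\|_2^2+c\hat u\|\lassoat\|_1$, hence $\|\trex\|_1\le\frac{1}{c\lest}\|r_L\|_2^2+\|\lassoat\|_1$, and then $\frac{2W}{c\lest}\le\frac{2}{\kappa_2}$ converts $(2W-c\hat u)\|\trex\|_1$ into $\frac{2}{\kappa_2}\|r_L\|_2^2+2W\|\lassoat\|_1$; re-expanding $\|r_L\|_2^2$ into a prediction error supplies precisely the $\frac{2}{\kappa_2}\|X\lassoat-X\bt\|_2^2$ and $\frac{4}{\kappa_2}W\|\lassoat-\bt\|_1$ pieces.

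In either case the surviving noise contribution has the form $(\rho-1)\|\varepsilon\|_2^2+\frac{2}{\kappa_2}\|\varepsilon\|_2^2+4W\|\bt\|_1$ (with the $\frac{2}{\kappa_2}$ term absent in the first case). Here Assumption~\ref{signalstrengthup_kappa} enters in the sharp form $4W\|\bt\|_1\le(1-\frac1{\kappa_1}-\frac2{\kappa_2})\|\varepsilon\|_2^2$, which collapses this contribution to $(\rho-\frac1{\kappa_1})\|\varepsilon\|_2^2\le 0$ and can be discarded. After bounding $2W\|\lassoat\|_1\le 2W\|\lassoat-\bt\|_1+2W\|\bt\|_1$ and collecting terms, the coefficient of $\|X\lassoat-X\bt\|_2^2$ becomes $\rho+\frac2{\kappa_2}\le\frac1{\kappa_1}+\frac2{\kappa_2}$ and that of $W\|\lassoat-\bt\|_1$ becomes $2+\frac2{\kappa_1}+\frac4{\kappa_2}$, which is the claim. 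The degenerate situation $\lassoat=0$ (equivalently $\lest\ge\|X^\top Y\|_\infty$) I would handle separately by replacing $\lest$ with the effective correlation $\|X^\top(Y-X\lassoat)\|_\infty$ in the comparison; the hypothesis $\hat u\le\|X^\top Y\|_\infty/\kappa_1$ is precisely what preserves $\rho\le 1/\kappa_1$ there, and this case is where I expect the bookkeeping to be most delicate.
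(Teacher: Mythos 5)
Your proposal is correct and follows essentially the same route as the paper's proof: compare the TREX objective against $\lassoat$ using the LASSO KKT identity $\|X^\top(Y-X\lassoat)\|_\infty=\lest$, invoke Lemma~\ref{res:l1trexlasso} (plus monotonicity of $\lambda\mapsto\|\lasso(\lambda)\|_1$) to control $\|\trex\|_1$ from below, upper-bound $\|\trex\|_1$ by the TREX objective value and hence by the LASSO objective at $\lest$ to extract the $2/\kappa_2$ terms, and absorb the residual $\|\varepsilon\|_2^2$ and $\|X^\top\varepsilon\|_\infty\|\bt\|_1$ contributions via Assumption~\ref{signalstrengthup_kappa}. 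The only difference is organizational: where you split on the sign of $2\|X^\top\varepsilon\|_\infty-c\hat u$, the paper avoids the case distinction by unconditionally adding the nonnegative term $\|Y-X\trex\|_2^2/(c\hat u)$ to $\|\trex\|_1$ before passing to the LASSO objective; both yield the stated constants.
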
 
Theorem~\ref{trexfast} follows from Theorem~\ref{res:fast4} by setting $\kappa_{1}=2$, and $\kappa_{2}=8$.

\begin{proof}[of Theorem~\ref{res:fast4}]
Assume first  $\lasso(\lest)=0$.
Then, since $\hat u\leq \|X^\top Y\|_\infty/\kappa_1$, the definition of the TREX implies
\begin{equation*}
\|Y-X\trex\|^{2}_{2}+\consttrextwo\hat u\|\trex\|_{1}\le \|Y\|_2^2/\kappa_1 =\|Y-X\lassoat\|^{2}_{2}/\kappa_1\,.
\end{equation*}
Assume now $\lasso(\lest)\neq 0$. In view of the KKT conditions for LASSO, $\lassoat$ fulfills 
\begin{equation*}
  \|X^\top (Y-X\lassoat)\|_\infty=\lest=\max\{\kappa_{1} \hat u,\frac{\kappa_{2}}{\consttrextwo}\|X^\top \varepsilon\|_\infty\}\geq \kappa_{1}\hat u.
\end{equation*}
The definition of  the TREX therefore yields
\begin{equation*}
\|Y-X\trex\|^{2}_{2}+\consttrextwo\hat u\|\trex\|_{1}\le \frac{\|Y-X\lassoat\|^{2}_{2}}{{\kappa_{1}}}+\consttrextwo\hat u\|\lassoat\|_1.
\end{equation*}
We now observe that since $\kappa_1> 1$ by assumption, we have $\lest\geq \hat u$, and one can verify easily that this implies $\|\lasso(\hat u)\|_{1}\geq \|\lassoat\|_1$. At the same time, Lemma~\ref{res:l1trexlasso} ensures $\|\trex\|_1\geq \|\lasso^{\hat u}\|_1$.  Thus, $\consttrextwo\hat u\|\trex\|_{1}\ge\consttrextwo\hat u\|\lassoat\|_1$ and, therefore, we find again
\begin{equation*}
\|Y-X\trex\|^{2}_{2}\le \frac{\|Y-X\lassoat\|^{2}_{2}}{\kappa_{1}}.
\end{equation*}
Invoking the model, this results in
\begin{align*}
&\kappa_1\|X\trex-X\betatrue\|^{2}_{2}\\
\le &\left(1-\kappa_{1}\right)\|\varepsilon\|^{2}_{2}+{2\varepsilon^\top (X\bt - X\lassoat)}+2\varepsilon^\top (X\trex - X\bt)+{\|X\lassoat-X\bt\|^{2}_{2}}.
\end{align*}
We can now use H\"older's inequality and the triangle inequality to deduce
\begin{align*}
&\|X\trex-X\bt\|^{2}_{2}\\\le& \left(\frac{1-\kappa_{1}}{\kappa_{1}}\right)\|\varepsilon\|_2^2+\frac{2\|X^\top\varepsilon\|_\infty\|\bt-\lassoat\|_1}{\kappa_{1}}\\
&+ 2\|X^\top\varepsilon\|_\infty\|\trex\|_1+2\|X^\top\varepsilon\|_\infty\|\bt\|_1+ \frac{\|X\lassoat-X\bt\|^{2}_{2}}{\kappa_1} \\
\leq & \left(\frac{1-\kappa_{1}}{\kappa_{1}}\right)\|\varepsilon\|_2^2+\frac{2\|X^\top\varepsilon\|_\infty\|\bt-\lassoat\|_1}{\kappa_{1}}\\
&+ 2\|X^\top\varepsilon\|_\infty\left(\frac{\|Y-X\trex \|_2^2}{\consttrextwo\hat u}+ \|\trex\|_1\right)+2\|X^\top\varepsilon\|_\infty\|\bt\|_1+ \frac{\|X\lassoat-X\bt\|^{2}_{2}}{\kappa_1}.
\end{align*}
Next, we observe that by the definition of our estimator~$\trex$ and of~$\lest$,
\begin{align*}
& \frac{\|Y-X\trex \|_2^2}{\consttrextwo\hat u}+ \|\trex\|_1\leq\frac{\|Y-X\lassoat \|_2^2}{\consttrextwo\lest}+ \|\lassoat\|_1\leq\frac{\|Y-X\lassoat \|_2^2}{\kappa_2\|X^\top\varepsilon\|_\infty}+ \|\lassoat\|_1.
\end{align*}
Combining these two displays and using the model assumption then gives
\begin{align*}
&\|X\trex-X\bt\|^{2}_{2}\\
\leq & \left(\frac{1-\kappa_{1}}{\kappa_{1}}\right)\|\varepsilon\|_2^2+\frac{2\|X^\top\varepsilon\|_\infty\|\bt-\lassoat\|_1}{\kappa_{1}}\\
&+ 2\|X^\top\varepsilon\|_\infty\left(\frac{\|Y-X\lassoat \|_2^2}{\kappa_2\|X^\top\varepsilon\|_\infty}+ \|\lassoat\|_1\right)+2\|X^\top\varepsilon\|_\infty\|\bt\|_1+ \frac{\|X\lassoat-X\bt\|^{2}_{2}}{\kappa_1}\\
=& \left(\frac{1-\kappa_{1}}{\kappa_{1}}+\frac{2}{\kappa_2}\right)\|\varepsilon\|_2^2+\frac{2\|X^\top\varepsilon\|_\infty\|\bt-\lassoat\|_1}{\kappa_{1}}\\
&+ 2\|X^\top\varepsilon\|_\infty\left(\frac{2\varepsilon^\top X(\bt-\lassoat)}{\kappa_2\|X^\top\varepsilon\|_\infty}+ \|\lassoat\|_1\right)+2\|X^\top\varepsilon\|_\infty\|\bt\|_1\\
&+\left(\frac{1}{\kappa_1}+\frac{2}{\kappa_2}\right){\|X\lassoat-X\bt\|^{2}_{2}}.
\end{align*}
We can now use H\"older's inequality and rearrange the terms to get
\begin{align*}
&\|X\trex-X\bt\|^{2}_{2}\\
\leq& \left(\frac{1-\kappa_{1}}{\kappa_{1}}+\frac{2}{\kappa_2}\right)\|\varepsilon\|_2^2+\left(\frac{1}{\kappa_1}+\frac{2}{\kappa_2}\right)2\|X^\top\varepsilon\|_\infty\|\bt-\lassoat\|_1\\
&+ 2\|X^\top\varepsilon\|_\infty\|\lassoat\|_1+2\|X^\top\varepsilon\|_\infty\|\bt\|_1+\left(\frac{1}{\kappa_1}+\frac{2}{\kappa_2}\right){\|X\lassoat-X\bt\|^{2}_{2}}.
\end{align*}
The last step is to use Assumption~\ref{signalstrengthup_kappa}, which ensures
\begin{equation*}
\|\bt\|_1\le\left(\frac14-\frac1{4\kappa_{1}}-\frac1{2\kappa_{2}}\right)\frac{\|\varepsilon\|_2^2}{\|X^{\top}\varepsilon\|_{\infty}}
\end{equation*}
and, therefore,
\begin{equation*}
4\|X^\top\varepsilon\|_\infty\|\bt\|_1\le\left(\frac{\kappa_{1}-1}{\kappa_{1}}-\frac2{\kappa_{2}}\right)\|\varepsilon\|_2^2.
\end{equation*}
The above display therefore yields
\begin{align*}
&\|X\trex-X\bt\|^{2}_{2}\leq -4\|X^\top\varepsilon\|_\infty\|\bt\|_1 +\left(\frac{1}{\kappa_1}+\frac{2}{\kappa_2}\right)2\|X^\top\varepsilon\|_\infty\|\bt-\lassoat\|_1\\
&~~~~+ 2\|X^\top\varepsilon\|_\infty\|\lassoat\|_1+2\|X^\top\varepsilon\|_\infty\|\bt\|_1+\left(\frac{1}{\kappa_1}+\frac{2}{\kappa_2}\right){\|X\lassoat-X\bt\|^{2}_{2}}.
\end{align*}
Using the triangle inequality, we finally obtain
\begin{align*}
&\|X\trex-X\bt\|^{2}_{2}\leq \left(2+\frac{2}{\kappa_1}+\frac{4}{\kappa_2}\right)\|X^\top\varepsilon\|_\infty\|\lassoat-\bt\|_1+\left(\frac{1}{\kappa_1}+\frac{2}{\kappa_2}\right){\|X\lassoat-X\bt\|^{2}_{2}}
\end{align*}
as desired.
\end{proof}

\begin{corollary}\label{cor:fastTgen}  Let Assumption~\ref{signalstrengthup_kappa} be fulfilled, and let  $\lest:=\max\{\kappa_{1} \hat u,\frac{\kappa_{2}}{\consttrextwo}\|X^\top \varepsilon\|_\infty\}$. Furthermore, let $\kappa_1,\kappa_2>0$ be such that
\newjl{
$$
\frac1{\kappa_2}+\frac{\kappa_1}{\kappa_2 + 2\kappa_1} \le \frac{1}{c}.
$$
 }
Then for any  $\hat u\leq \|X^\top Y\|_\infty/\kappa_1$, the prediction loss of TREX satisfies
\begin{equation*}
\|X\trex-X\bt\|^{2}_{2}\leq \left(\frac{1}{\kappa_1}+\frac{2}{\kappa_2}\right)\frac{16s\lambdatrex^2}{\nu^2 n},
\end{equation*}
where $\nu$ is the compatibility constant defined in~\eqref{compcond}.
\end{corollary}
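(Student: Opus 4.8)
The plan is to feed the LASSO's \emph{joint} prediction-and-estimation oracle inequality into the bound of Theorem~\ref{res:fast4} and to show that the stated restriction on $(\kappa_1,\kappa_2,c)$ is exactly what collapses the two contributions into a single term. Throughout I abbreviate $\eta:=\lassoat-\bt$ and $p:=\tfrac1{\kappa_1}+\tfrac2{\kappa_2}$, noting that Assumption~\ref{signalstrengthup_kappa} forces $p<1$. Theorem~\ref{res:fast4} then reads
\[
\|X\trex-X\bt\|_2^2\le p\,\|X\eta\|_2^2+2(1+p)\,\|X^\top\varepsilon\|_\infty\,\|\eta\|_1 ,
\]
so the task reduces to controlling this right-hand side.

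First I would verify that the LASSO fast-rate machinery is actually available. Since $p<1$, the hypothesis $\tfrac1{\kappa_2}+\tfrac{\kappa_1}{\kappa_2+2\kappa_1}\le\tfrac1c$ is equivalent (after clearing denominators) to $c\le\tfrac{\kappa_2 p}{1+p}<\tfrac{\kappa_2}{2}$, whence $\lest\ge\tfrac{\kappa_2}{c}\|X^\top\varepsilon\|_\infty>2\|X^\top\varepsilon\|_\infty$. The standard argument behind Lemma~\ref{fastratelasso} (basic inequality, the cone inequality $\|\eta_{S^c}\|_1\le3\|\eta_S\|_1$, and the compatibility condition~\eqref{compcond}) then yields the joint bound $\|X\eta\|_2^2+\lest\|\eta\|_1\le 4\lest\|\eta_S\|_1\le 4\lest\tfrac{\sqrt s}{\sqrt n\,\nu}\|X\eta\|_2$.

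Then I would combine. Writing $t:=\|X\eta\|_2$, $m:=\lest\|\eta\|_1$, and $w:=\lest\sqrt s/(\sqrt n\,\nu)$, the joint bound becomes $t^2+m\le 4wt$, which in particular forces $t\le 4w$. Bounding $\|X^\top\varepsilon\|_\infty\le\tfrac{c}{\kappa_2}\lest$ turns the right-hand side of Theorem~\ref{res:fast4} into at most $\alpha t^2+\beta m$ with $\alpha:=p$ and $\beta:=2(1+p)\tfrac{c}{\kappa_2}$, and the hypothesis on $(\kappa_1,\kappa_2,c)$ is \emph{precisely} $\beta\le 2\alpha$. A short computation finishes: using $m\le 4wt-t^2$ gives $\alpha t^2+\beta m\le 4\beta wt-(\beta-\alpha)t^2=:q(t)+16\alpha w^2$, where the quadratic $q(t):=-(\beta-\alpha)t^2+4\beta wt-16\alpha w^2$ vanishes at $t=4w$ and has $q'(4w)=4w(2\alpha-\beta)\ge0$; a brief case check (concave for $\beta>\alpha$, linear/convex otherwise) shows $q\le0$ on $[0,4w]$ whenever $\beta\le2\alpha$, so $\alpha t^2+\beta m\le 16\alpha w^2=\tfrac{16 p\, s\,\lest^2}{n\,\nu^2}$, which is the claim.

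The main obstacle is this final coupling step, and it is worth flagging why the naive route fails. Plugging in the \emph{marginal} worst cases $\|X\eta\|_2^2\le16w^2$ and $\lest\|\eta\|_1\le4w^2$ separately overshoots the target by a term of order $c(1+p)/\kappa_2$ that cannot be absorbed into $16p$. The resolution is that the prediction error and the $\ell_1$-error cannot be simultaneously extremal: they are tied by $t^2+m\le4wt$, so when $t$ is largest ($t=4w$) the quantity $m$ is forced to zero, and conversely. Exploiting this joint constraint rather than the two marginal bounds is exactly what recovers the sharp constant $16p$ and identifies $\beta\le2\alpha$ as the natural, and as computed exactly equivalent, form of the stated hypothesis.
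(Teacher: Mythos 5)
Your proof is correct, and its skeleton is the paper's: apply Theorem~\ref{res:fast4}, replace $\|X^\top\varepsilon\|_\infty$ by $(c/\kappa_2)\lest$ via the definition of $\lest$, and observe that the stated hypothesis on $(\kappa_1,\kappa_2,c)$ is exactly the condition that the coefficient multiplying $\lest\|\lassoat-\bt\|_1$ be at most twice the coefficient multiplying $\|X\lassoat-X\bt\|_2^2$ (your $\beta\le2\alpha$; the paper writes the same condition as $(2+\tfrac{2\kappa_1\kappa_2}{\kappa_2+2\kappa_1})\tfrac{c}{\kappa_2}\le2$). The only divergence is the final coupling step. The paper quotes the LASSO oracle inequality in its joint form $\|X\lassoat-X\bt\|_2^2+2\lest\|\lassoat-\bt\|_1\le 16s\lest^2/(\nu^2n)$, so that $\beta\le2\alpha$ finishes in one line via $\alpha t^2+\beta m\le\alpha(t^2+2m)\le16\alpha w^2$ in your notation. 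You instead re-derive the stronger intermediate inequality $t^2+m\le4wt$ and run a quadratic case analysis over $[0,4w]$. That analysis is valid, and two of your side remarks are genuinely useful: the observation that the two \emph{marginal} bounds cannot be combined naively, and the explicit check that the hypothesis forces $\lest\ge(\kappa_2/c)\|X^\top\varepsilon\|_\infty>2\|X^\top\varepsilon\|_\infty$ (via $p<1$ from Assumption~\ref{signalstrengthup_kappa}), so the fast-rate machinery is actually applicable --- a point the paper leaves implicit. But the quadratic optimization buys nothing here: $t^2+m\le4wt$ already implies $t^2+2m\le16w^2$ (use $4wt\le t^2/2+8w^2$), and that weaker joint bound suffices to reach the same constant $16p$.
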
 
Corollary~\ref{cor:fastT} follows from Corollary~\ref{cor:fastTgen} by setting $\kappa_{1}=2$, and $\kappa_{2}=8$, which satisfy the requirement for any $c\in (0,2)$.
\begin{proof}[of Corollary~\ref{cor:fastTgen}]
Using Theorem~\ref{res:fast4} and the definition of $\lambdatrex$, the TREX prediction loss satisfies
\begin{align*}
\|X\trex-X\bt\|^{2}_{2}&\leq \left(\frac{1}{\kappa_1}+\frac{2}{\kappa_2}\right){\|X\lassoat-X\bt\|^{2}_{2}}+ \left(2+\frac{2}{\kappa_1}+\frac{4}{\kappa_2}\right)\|X^\top\varepsilon\|_\infty\|\lassoat-\bt\|_1\\
&\leq \left(\frac{1}{\kappa_1}+\frac{2}{\kappa_2}\right)\left[\|X\lassoat-X\bt\|^{2}_{2} + \left(2+\frac{2\kappa_1\kappa_2}{\kappa_2 + 2\kappa_1}\right)\frac{c}{\newjl{\kappa_2 }}\lambdatrex \|\lassoat-\bt\|_1\right].
\end{align*}
On the other hand, the LASSO estimator $\lassoat$ satisfies \cite[Theorem~6.1]{Buhlmann11}
$$
\|X\lassoat-X\bt\|^{2}_{2} +2\lest\|\lassoat-\bt\|_1 \le \frac{16s\lest^2}{\nu^2n}.
$$
Since by assumption
$$
\left(2 + \frac{2\kappa_1\kappa_2}{\kappa_2 + 2\kappa_1}\right)\frac{c}{\newjl{\kappa_2} }\le 2,
$$
the LASSO bound implies
\begin{align*}
\|X\trex-X\bt\|^{2}_{2}&\leq \left(\frac{1}{\kappa_1}+\frac{2}{\kappa_2}\right)\frac{16s\lambdatrex^2}{\nu^2 n}
\end{align*}
as desired.
\end{proof}

\section{Proof of a Generalization of Theorem~\ref{slowratetrex}}\label{s:slowrate}
We consider a generalization of the TREX according to
\begin{align*}
  \trex\in\argmin\left\{\frac{\|Y-X\beta\|^2_2}{\consttrextwo\Omega^*(X^\top(Y-X\beta))}+\Omega(\beta)\right\},
\end{align*}
where $\Omega$ is a norm on $\rp$, $\Omega^{*}(\eta):=\sup\{\eta^{\top}\beta:\Omega(\beta)\le 1\}$ is the dual of $\Omega$, $0<\consttrextwo<2$, and the minimum is taken over all~$\beta\in\rp$. 
We also set $\hat u:=\Omega^{*}(X^{\top}(Y-X\trex))$ with some abuse of notation. The corresponding generalization of Assumption~\ref{signalstrengthlow} then reads as follows.
 \begin{assumption}\label{signalstrengthlow_c} The regression vector $\bt$ is sufficiently large such that
    \begin{equation*}
  \Omega^{*}(X^{\top}X\bt)\ge \left(1+\frac2{\consttrextwo}\right)\Omega^{*}(X^{\top}\varepsilon).    
    \end{equation*}
 \end{assumption}
We now prove a generalization of Theorem~\ref{slowratetrex}.
\begin{theorem}\label{res:bound}  Let Assumption~\ref{signalstrengthlow_c} be fulfilled. If $\hat u\leq\Omega^{*}(X^{\top}Y)$, then the prediction loss of TREX satisfies
\begin{equation*}
\frac{\|X\trex-X\betatrue\|_{2}^{2}}{n}\le \frac{\left(2\Omega^{*}(X^{\top}\varepsilon)+\max\left\{\hat u,\frac2{\consttrextwo}\Omega^{*}(X^{\top}\varepsilon)\right\}\right)\Omega(\betatrue)}{n}.
\end{equation*}
\end{theorem}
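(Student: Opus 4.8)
The plan is to bound $\|X\trex-X\betatrue\|_2^2$ by combining the global optimality of $\trex$ with the elementary identity $X\trex-X\betatrue=\varepsilon-(Y-X\trex)$, the duality bound $\langle a,b\rangle\le\Omega^*(a)\Omega(b)$, and a split into the two regimes $\hat u\ge\frac2c\Omega^*(X^\top\varepsilon)$ and $\hat u<\frac2c\Omega^*(X^\top\varepsilon)$ that generate the maximum in the statement. Throughout I write $r:=Y-X\trex$, so $\Omega^*(X^\top r)=\hat u$ and $X\trex-X\betatrue=\varepsilon-r$.

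First I would use the identity to write $\|X\trex-X\betatrue\|_2^2=\langle X^\top\varepsilon,\trex-\betatrue\rangle-\langle X^\top r,\trex-\betatrue\rangle$ and bound the $\betatrue$-parts of the two inner products by $\Omega^*(X^\top\varepsilon)\Omega(\betatrue)$ and $\hat u\,\Omega(\betatrue)$ respectively. This already produces the clean contribution toward the target and leaves the estimator-dependent piece $\Omega^*(X^\top\varepsilon)\Omega(\trex)-\langle X^\top r,\trex\rangle$ to be absorbed. The remaining task is to feed in optimality so as to control this piece.

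The obvious move is the basic inequality against $\betatrue$, namely $\frac{\|r\|_2^2}{c\hat u}+\Omega(\trex)\le\frac{\|\varepsilon\|_2^2}{c\Omega^*(X^\top\varepsilon)}+\Omega(\betatrue)$; multiplying by $c\hat u$ and substituting the model, however, introduces the term $\bigl(\hat u/\Omega^*(X^\top\varepsilon)-1\bigr)\|\varepsilon\|_2^2$, whose coefficient is positive exactly when $\hat u>\Omega^*(X^\top\varepsilon)$. This order-$n$ quantity is absent from the target and cannot be absorbed into $\Omega^*(X^\top\varepsilon)\Omega(\betatrue)$: the difficulty is that the data-driven denominator $\hat u$ of the TREX need not match the ``ideal'' denominator $\Omega^*(X^\top\varepsilon)$ attached to $\betatrue$. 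The resolution I would pursue is to instead exploit the hypothesis $\hat u\le\Omega^*(X^\top Y)$, which says the TREX denominator does not exceed the objective's denominator at $\beta=0$, and compare $\trex$ against the origin: $\frac{\|r\|_2^2}{c\hat u}+\Omega(\trex)\le\frac{\|Y\|_2^2}{c\,\Omega^*(X^\top Y)}$. Writing $\|Y\|_2^2=\langle X^\top Y,\betatrue\rangle+\langle X^\top\varepsilon,\betatrue\rangle+\|\varepsilon\|_2^2$ and substituting, the quadratic noise term now carries the nonpositive coefficient $\bigl(\hat u/\Omega^*(X^\top Y)-1\bigr)\|\varepsilon\|_2^2\le0$ and is discarded, while the other pieces are handled by duality. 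Assumption~\ref{signalstrengthlow_c} enters through its consequence $\Omega^*(X^\top Y)\ge\frac2c\Omega^*(X^\top\varepsilon)$ (reverse triangle inequality), which turns the ratio $\Omega^*(X^\top\varepsilon)/\Omega^*(X^\top Y)$ into a constant; on the branch $\hat u\ge\frac2c\Omega^*(X^\top\varepsilon)$ the estimator term $-c\hat u\,\Omega(\trex)$ then dominates the noise contribution $2\Omega^*(X^\top\varepsilon)\Omega(\trex)$, letting the $\Omega(\trex)$ terms be dropped, and on the complementary branch the threshold $\frac2c\Omega^*(X^\top\varepsilon)$ takes over. Dividing by $n$ assembles the two regimes into the stated bound, and the reduction of Theorem~\ref{res:bound} to Theorem~\ref{slowratetrex} is immediate on taking $\Omega=\|\cdot\|_1$, $\Omega^*=\|\cdot\|_\infty$.

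I expect the main obstacle to be precisely this denominator mismatch — equivalently, absorbing the estimator-dependent terms $\Omega^*(X^\top\varepsilon)\Omega(\trex)-\langle X^\top r,\trex\rangle$, which plays the role that dropping $\Omega(\lassoa)$ plays in the LASSO slow-rate argument (Lemma~\ref{slowratelasso}). Making the coefficient of $\Omega(\trex)$ nonpositive using only $\hat u\le\Omega^*(X^\top Y)$ and the signal assumption, rather than first-order conditions that are unavailable for a general and possibly non-smooth dual norm $\Omega^*$, is where the bookkeeping must be done carefully; the exact constants $2\Omega^*(X^\top\varepsilon)$ and $\max\{\hat u,\frac2c\Omega^*(X^\top\varepsilon)\}$ should then emerge from tracking the noise cross-term $\langle X^\top\varepsilon,\betatrue\rangle$ together with the case split.
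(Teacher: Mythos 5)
Your overall route is the paper's route: compare $\trex$ against $\beta=0$ using the hypothesis $\hat u\le\Omega^*(X^\top Y)$, expand via the model, extract the consequence $\Omega^*(X^\top Y)\ge\tfrac{2}{c}\Omega^*(X^\top\varepsilon)$ of Assumption~\ref{signalstrengthlow_c} by the reverse triangle inequality, and split on whether $\hat u$ exceeds $\tfrac{2}{c}\Omega^*(X^\top\varepsilon)$. Your treatment of the large-$\hat u$ branch is essentially the paper's Case~1: there the coefficient $2\Omega^*(X^\top\varepsilon)-c\hat u$ of $\Omega(\trex)$ is nonpositive, the term $\bigl(\hat u/\Omega^*(X^\top Y)-1\bigr)\|\varepsilon\|_2^2$ is nonpositive, and what remains is handled by H\"older plus $\Omega^*(X^\top Y)\ge\Omega^*(X^\top X\bt)-\Omega^*(X^\top\varepsilon)$ to turn $\hat u\,\Omega^*(X^\top X\bt)/\Omega^*(X^\top Y)$ into $\hat u$ plus a correction absorbed by the $2\Omega^*(X^\top\varepsilon)$ term.

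The gap is in the complementary branch $\hat u<\tfrac{2}{c}\Omega^*(X^\top\varepsilon)$, which you leave at ``the threshold takes over'' and frame as ``making the coefficient of $\Omega(\trex)$ nonpositive.'' That cannot be done there: the coefficient $2\Omega^*(X^\top\varepsilon)-c\hat u$ is strictly positive on this branch, and as $\hat u\to 0$ you are left with a term of order $\Omega^*(X^\top\varepsilon)\,\Omega(\trex)$ with no a priori control on $\Omega(\trex)$. The paper's resolution is a \emph{second} use of the optimality at the origin, now as an upper bound on the penalty alone, $\Omega(\trex)\le\|Y\|_2^2/\bigl(c\,\Omega^*(X^\top Y)\bigr)$, which after expanding $\|Y\|_2^2$ reintroduces a \emph{positive} multiple $\tfrac{2\Omega^*(X^\top\varepsilon)-c\hat u}{c\,\Omega^*(X^\top Y)}\|\varepsilon\|_2^2$ of the quadratic noise. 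This is the second place your sketch goes wrong: you propose to discard $\bigl(\hat u/\Omega^*(X^\top Y)-1\bigr)\|\varepsilon\|_2^2$ immediately because it is nonpositive, but on this branch that term must be \emph{retained} so that the $\hat u/\Omega^*(X^\top Y)$ contributions cancel exactly and the combined coefficient of $\|\varepsilon\|_2^2$ becomes $\tfrac{2\Omega^*(X^\top\varepsilon)}{c\,\Omega^*(X^\top Y)}-1$, which is nonpositive precisely by the consequence $\Omega^*(X^\top Y)\ge\tfrac{2}{c}\Omega^*(X^\top\varepsilon)$ of Assumption~\ref{signalstrengthlow_c}. Discarding it early leaves an uncontrolled $O(\|\varepsilon\|_2^2)$ remainder. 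With that repair --- bound $\Omega(\trex)$ by the basic inequality, keep the negative noise term for the cancellation, then apply H\"older and the reverse triangle inequality --- the branch closes with the constant $(2+2/c)\Omega^*(X^\top\varepsilon)\Omega(\bt)$, matching the $\max$ in the statement.
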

\noindent Theorem~\ref{slowratetrex} follows from Theorem~\ref{res:bound} by setting $\Omega(\cdot):=\|\cdot\|_{1}.$

\begin{proof}[of Theorem~\ref{res:bound}]
The definition of the estimator implies
\begin{equation*}
    \frac{\|Y-X\trex\|^2_2}{\consttrextwo\cdot \hat u}+\Omega(\trex)\leq\frac{\|Y\|^2_2}{\consttrextwo\cdot\Omega^*(X^\top Y)},
\end{equation*}
which yields together with  the model assumptions
\begin{align*}
&\|X\trex-X\betatrue\|^{2}_{2}+\|\varepsilon\|^{2}_{2}+2\varepsilon^{\top}(X\bt-X\trex)+\consttrextwo\hat u\Omega(\trex)\\
\le& \frac{\hat u}{\Omega^{*}(X^{\top}Y)}\left(\|\varepsilon\|^{2}_{2}+\|X\betatrue\|_{2}^{2}+2\varepsilon^{\top}X\betatrue\right).
\end{align*}
Rearranging the terms and H\"{o}lder's inequality in the form of  $2\varepsilon^\top X\trex\leq 2\Omega^*(X^\top\varepsilon)\Omega(\trex)$ and $\|X\betatrue\|^{2}_{2}=\betatrue{}^{\top}X^{\top}X\betatrue\le \Omega^{*}(X^{\top}X\betatrue)\Omega(\betatrue)$ then gives
\begin{equation}\label{eq:interim2}
\begin{split}
\|X\trex-X\betatrue\|^{2}_{2}\le &\left(\frac{\hat u}{\Omega^{*}(X^{\top}Y)}-1\right)\|\varepsilon\|^{2}_{2}+(2\Omega^{*}(X^{\top}\varepsilon)-\consttrextwo\hat u)\Omega(\trex)\\
&+\frac{\hat u\Omega^{*}(X^{\top}X\betatrue)\Omega(\betatrue)}{\Omega^{*}(X^{\top}Y)}+2\left(\frac{\hat u}{\Omega^{*}(X^{\top}Y)}-1\right)2\varepsilon^{\top}X\betatrue.
\end{split}
\end{equation}
{\bf Case 1:} We first consider the case $2\Omega^{*}(X^{\top}\varepsilon)\leq \consttrextwo\hat u$.\\
For this, we first note that $\hat u \le \Omega^{*}(X^{\top}Y)$ by assumption. Using this and $2\Omega^{*}(X^{\top}\varepsilon)\leq \consttrextwo\hat u$ allows us to remove the first two terms on the right hand side of Inequality~\eqref{eq:interim2} so that
\begin{align*}
\|X\trex-X\betatrue\|^{2}_{2}\le \frac{\hat u\Omega^{*}(X^{\top}X\betatrue)\Omega(\betatrue)}{\Omega^{*}(X^{\top}Y)}+\left(\frac{\hat u}{\Omega^{*}(X^{\top}Y)}-1\right)2\varepsilon^{\top}X\betatrue.
\end{align*}
Since $2\varepsilon^\top X\betatrue\leq 2\Omega^{*}(X^{\top}\varepsilon)\Omega(\betatrue)$ due to H\"older's Inequality, and since $\hat u\leq \Omega^*(X^\top Y)$ by definition of our estimator, we therefore obtain from the above display and the model assumptions
\begin{align*}
\|X\trex-X\betatrue\|^{2}_{2}&\le \frac{\hat u\Omega^{*}(X^{\top}X\betatrue)\Omega(\betatrue)}{\Omega^{*}(X^{\top}Y)}+\left(1-\frac{\hat u}{\Omega^{*}(X^{\top}Y)}\right)2\Omega^{*}(X^{\top}\varepsilon)\Omega(\betatrue)\\
&=2\Omega^{*}(X^{\top}\varepsilon)\Omega(\betatrue)+\hat u\left(\frac{\Omega^{*}(X^{\top}X\betatrue)-2\Omega^{*}(X^{\top}\varepsilon)}{\Omega^{*}(X^{\top}Y)}\right)\Omega(\betatrue)\\
&=2\Omega^{*}(X^{\top}\varepsilon)\Omega(\betatrue)+\hat u\left(\frac{\Omega^{*}(X^{\top}X\betatrue)-2\Omega^{*}(X^{\top}\varepsilon)}{\Omega^{*}(X^{\top}X\betatrue+X^{\top}\varepsilon)}\right)\Omega(\betatrue).
\end{align*}
Next, we note that the triangle inequality gives
$$\Omega^{*}(X^{\top}X\betatrue+X^{\top}\varepsilon)\ge \Omega^{*}(X^{\top}X\betatrue)-\Omega^{*}(X^{\top}\varepsilon).$$
Plugging this in the previous display finally yields
\begin{align*}
\|X\trex-X\betatrue\|^{2}_{2}\le 2\Omega^{*}(X^{\top}\varepsilon)\Omega(\betatrue)+\hat u\Omega(\betatrue),
\end{align*}
which concludes the proof for Case 1.\\
\noindent
{\bf Case 2:} We now consider the case $2\Omega^{*}(X^{\top}\varepsilon)\geq \consttrextwo\hat u$.\\
Similarly as before, we start with the definition of the estimator, which yields in particular
\begin{equation*}
\Omega(\trex)\leq\frac{\|Y\|^2_2}{\consttrextwo\cdot\Omega^*(X^\top Y)}.
\end{equation*}
Invoking the model assumptions and H\"older's inequality then gives
\begin{align*}
\Omega(\trex)&\le\frac{1}{\consttrextwo\Omega^{*}(X^{\top}Y)}\left(\|\varepsilon\|^{2}_{2}+\|X\betatrue\|^{2}_{2}+2\varepsilon^{\top}X\betatrue\right)\\
&\le\frac{1}{\consttrextwo\Omega^{*}(X^{\top}Y)}\left(\|\varepsilon\|^{2}_{2}+\Omega^{*}(X^{\top}X\betatrue)\Omega(\betatrue)+2\varepsilon^\top X\betatrue\right).
\end{align*}
We can now plug this into Inequality~\eqref{eq:interim2} to obtain
\begin{align*}
&\|X\trex-X\betatrue\|^{2}_{2}\le \left(\frac{\hat u}{\Omega^{*}(X^{\top}Y)}-1\right)\|\varepsilon\|^{2}_{2}\\
&~~~~+(2\Omega^*(X^\top\varepsilon)-\consttrextwo\hat u)\frac{1}{\consttrextwo\Omega^{*}(X^{\top}Y)}\left(\|\varepsilon\|^{2}_{2}+\Omega^{*}(X^{\top}X\betatrue)\Omega(\betatrue)+2\varepsilon^{\top}X\betatrue\right)\\
&~~~~+\frac{\hat u\Omega^{*}(X^{\top}X\betatrue)\Omega(\betatrue)}{\Omega^{*}(X^{\top}Y)}+\left(\frac{\hat u}{\Omega^{*}(X^{\top}Y)}-1\right)2\varepsilon^{\top}X\betatrue.
\end{align*}
We can now  rearrange the terms to get
\begin{align*}
\|X\trex-X\betatrue\|^{2}_{2}\le& \left(\frac{2\Omega^{*}(X^{\top}\varepsilon)}{\consttrextwo\Omega^{*}(X^{\top}Y)}-1\right)\|\varepsilon\|^{2}_{2}+\left(\frac{2\Omega^{*}(X^{\top}\varepsilon)}{\consttrextwo\Omega^{*}(X^{\top}Y)}-1\right)2\varepsilon^{\top}X\betatrue\\
&+\frac{2\Omega^{*}(X^{\top}\varepsilon)}{\consttrextwo\Omega^{*}(X^{\top}Y)}\Omega^{*}(X^{\top}X\betatrue)\Omega(\betatrue).
\end{align*}
We now observe that Assumption~\ref{signalstrengthlow_c} implies via the triangle inequality and the model assumptions that
\begin{equation*}
  \Omega^{*}(X^{\top}Y)\ge \frac{2\Omega^{*}(X^{\top}\varepsilon)}{c}.    
    \end{equation*}
Using this, H\"older's inequality, and the triangle inequality, we then find
\begin{align*}
&\|X\trex-X\betatrue\|^{2}_{2}\\
\le& \left(1-\frac{2\Omega^{*}(X^{\top}\varepsilon)}{\consttrextwo\Omega^{*}(X^{\top}Y)}\right)2\Omega^*(X^{\top}\varepsilon)\Omega(\betatrue)\\
&+2\Omega^*(X^\top \varepsilon)\Omega(\bt)/\consttrextwo+\frac{2\Omega^*(X^\top \varepsilon)}{\consttrextwo\Omega^*(X^\top Y)}\Omega^*(X^\top \varepsilon)\Omega(\bt)\\
=& 2\Omega^*(X^{\top}\varepsilon)\Omega(\betatrue)+2\Omega^*(X^\top \varepsilon)\Omega(\bt)/\consttrextwo-\frac{2\Omega^*(X^\top \varepsilon)}{\consttrextwo\Omega^*(X^\top Y)}\Omega^*(X^\top \varepsilon)\Omega(\bt)\\
\leq &(2+2/\consttrextwo)\Omega^*(X^{\top}\varepsilon)\Omega(\betatrue).
\end{align*}
This concludes the proof for Case~2 and, therefore, the proof of Theorem~\ref{res:bound}.
\end{proof}

%
%

\end{document}